\newtheorem{theorem}{Theorem}
\newtheorem{conjecture}[theorem]{Conjecture}
\newtheorem{corollary}[theorem]{Corollary}
\newtheorem{lemma}[theorem]{Lemma}
\newtheorem{observation}[theorem]{Observation}
\newtheorem{problem}[theorem]{Problem}
\newtheorem{claim}{Claim}
\begin{document}

\title{On generalised majority edge-colourings of graphs}

\author[agh]{Pawe{\l} P\k{e}ka{\l}a}
\ead{ppekala@agh.edu.pl}

\author[agh]{Jakub Przyby{\l}o}
\ead{jakubprz@agh.edu.pl}

\address[agh]{AGH University, al. A. Mickiewicza 30, 30-059 Krakow, Poland}

\begin{abstract}
A \textit{$\frac{1}{k}$-majority $l$-edge-colouring} of a graph $G$ is a colouring of its edges with $l$ colours such that for every colour $i$ and each vertex $v$ of $G$, at most $\frac{1}{k}$'th of the edges incident with $v$ have colour $i$. We conjecture that for every integer $k\geq 2$, each graph with minimum degree $\delta\geq k^2$ is $\frac{1}{k}$-majority $(k+1)$-edge-colourable and observe that such result would be best possible. This was already known to hold for $k=2$. We support the conjecture by proving it with $2k^2$ instead of $k^2$, which confirms the right order of magnitude of the conjectured optimal lower bound for $\delta$. 
We at the same time improve the previously known bound of order $k^3\log k$, based on a straightforward probabilistic approach.
As this technique seems not applicable towards any further improvement, we use a more direct non-random approach.
We also strengthen our result, 
in particular substituting $2k^2$  by $(\frac{7}{4}+o(1))k^2$. Finally, we provide the proof of the conjecture itself for $k\leq 4$ and completely solve an analogous problem for the family of bipartite graphs. 
\end{abstract}

\begin{keyword}
majority colouring \sep edge majority colouring \sep $\frac{1}{k}$-majority edge-colouring
\end{keyword}

\maketitle

	\section{Introduction}

	A \textit{majority colouring} of a digraph $D$ is a colouring of the vertices of $D$ such that for every vertex of $D$ at most half the out-neighbours of $v$ receive the same colour as $v$. 
  This notion was first considered by Kreutzer, Oum, Seymour, van der Zypen and Wood~\cite{digraph1}, who in particular proved that every digraph has a majority $4$-colouring, and conjectured that $3$ colours should always suffice. They also posed several other related problems, addressed in a few consecutive papers.
  In particular, in~\cite{digraph2} Anholcer, Bosek and Grytczuk extended the result above to a list setting. 
  Further, in~\cite{digraph3,Knox-Samal} the authors
  studied the problem of $\frac{1}{k}$-majority colourings of digraphs, that is such colourings of the vertices of a digraph that each vertex receives the same colour as at most $\frac{1}{k}$'th of its out-neighbours, which is a natural generalisation, proposed already in~\cite{digraph1}.
  Girão, Kittipassorn and Popielarz~\cite{digraph3}, 
  and independently 
Knox and \v{S}\'amal~\cite{Knox-Samal} proved that for each $k\geq 2$, every digraph is $\frac{1}{k}$-majority $2k$-colourable,
while there are digraphs requiring no less than $2k-1$ colours.
  Further results and extensions can be found e.g. in~\cite{Szabo-majority,digraph2,MajorityGeneralOur}.

It is worth mentioning that optimal results concerning the natural correspondents of the notions above, but in the environment of graphs follow by the argument of Lov\'asz~\cite{Lovasz-majority}, printed already in 1966, see also~\cite{MajorityGeneralOur} for further comments and results.

	A \textit{majority edge-colouring} of a graph $G$ is a colouring of the edges of $G$ such that for each vertex $v$ of $G$, at most half of the edges incident with $v$ have the same colour. More generally, for an integer $k\geq 2$, a \textit{$\frac{1}{k}$-majority edge-colouring} of $G$ is a colouring of its edges such that for every colour $i$ and each vertex $v$ of $G$ at most $\frac{1}{k}$'th of the edges incident with $v$ have colour $i$. One of characteristic features of these notions, introduced recently by Bock, Kalinowski, Pardey, Pil\'sniak, Rautenbach and Wo\'zniak~\cite{majority23},
 is that unlike in the case of vertex-colourings, such edge-colourings do not exist for all graphs. 
 In particular, for every $k\geq 2$, graphs with vertices of degree $1$ do not admit a $\frac{1}{k}$-majority edge-colouring with any number of colours.  
 In~\cite{majority23} it was however proven that every graph $G$ of minimum degree $\delta\geq 2$ has a majority $4$-edge-colouring.
 On the other hand, the minimum degree of a graph may have significant influence on the number of colours sufficient to provide such colourings, and examining this problem 
 seems to be the primal issue in this area.
 The main result of~\cite{majority23} solves this problem for $k=2$.
 \begin{theorem}[\cite{majority23}]\label{majority23-res2}
Every graph $G$ of minimum degree $\delta\geq 4$ has a majority $3$-edge-colouring.
 \end{theorem}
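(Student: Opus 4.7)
The plan is to deduce the theorem from the classical Eulerian-circuit cyclic colouring, first in the purely even-degree case and then by a reduction. For a connected graph in which every vertex has even degree, I would fix a closed Eulerian trail $e_1 e_2 \cdots e_m$ (with indices read cyclically modulo $m$) and colour $e_i$ by $i \bmod 3$. At any vertex $v$ of degree $d(v)=2k$, the trail visits $v$ exactly $k$ times and each visit uses a consecutive pair $(e_i,e_{i+1})$, so if $a,b,c$ denote the numbers of visits whose first edge has position $\equiv 0,1,2 \pmod 3$, then $a+b+c=k$ and the three colour classes at $v$ have sizes $a+c,\; a+b,\; b+c$, each bounded by $k=d(v)/2$. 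Hence the cyclic colouring is automatically a majority $3$-edge-colouring of any graph with all even degrees.

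To pass from this to an arbitrary $G$ with $\delta(G)\geq 4$, I would let $T$ be the (even-sized) set of odd-degree vertices and add a phantom perfect matching $M$ on $T$ to form an auxiliary multigraph $G^+$ in which every degree is even, allowing a phantom edge to be parallel to an existing edge when necessary. Applying the Eulerian cyclic colouring above to each connected component of $G^+$ and then discarding the phantom edges produces a $3$-edge-colouring of $G$ which, by the analysis above, already satisfies the majority condition at every originally even-degree vertex.

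The main obstacle — and what I expect to occupy the bulk of the proof — is certifying the majority condition at the odd-degree vertices. At $v\in T$ with $d(v)=2k+1\geq 5$, the graph $G^+$ has degree $2k+2$ at $v$, so the Eulerian colouring there yields colour counts $a+c,\; a+b,\; b+c$ with $a+b+c=k+1$; removing the phantom edge at $v$ drops only one of these counts by $1$, yet the target is $\lfloor d(v)/2\rfloor = k$. The crux is therefore to arrange that, at each $v\in T$, the phantom edge sits at a cyclic position in the tour whose removal reduces a unique peak colour, and to do so simultaneously for \emph{both} endpoints of every phantom edge. I would attack this either by engineering $M$ and the Eulerian tour together — for instance, locally rerouting the trail at a $T$-endpoint through a short walk in order to shift $a,b,c$ modulo $3$ — or by performing swaps along short alternating walks that start at a violating odd-degree vertex and rebalance the colour counts. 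The slack $k\geq 2$ provided by the hypothesis $\delta\geq 4$ is what makes such repairs feasible; the delicate part is verifying that the corrections at different vertices of $T$ do not interfere with one another and do not push any originally even-degree vertex out of balance.
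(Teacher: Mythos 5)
Your first step is essentially sound: for a connected graph with all degrees even, colouring an Eulerian circuit $e_1\cdots e_m$ cyclically with three colours does give colour counts $a+c$, $a+b$, $b+c$ at a vertex of degree $2k$, each at most $a+b+c=k$. (There is a small unaddressed wrinkle at the starting vertex when $m\equiv 1 \pmod 3$, where the wrap-around pair $(e_m,e_1)$ receives the same colour twice and one count can reach $k+1$; this is repairable, e.g.\ by recolouring $e_m$, but you do not notice it.) The genuine gap is the second half. At an odd vertex $v$ with $d_G(v)=2k+1$, the counts in $G^+$ sum to $2k+2$ and nothing prevents the distribution $(k+1,k+1,0)$ (all $k+1$ visits of the same type); deleting the single phantom edge at $v$ lowers only one count, leaving a colour with $k+1>\lfloor d_G(v)/2\rfloor$ edges. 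So you need to guarantee, simultaneously at every odd vertex, that at most one colour attains $k+1$ \emph{and} that the phantom edge carries exactly that colour — and these requirements must be met coherently at both endpoints of each phantom edge. This is precisely the global balancing problem that constitutes the theorem, and your proposal offers only a declaration of intent ("engineering $M$ and the tour together", "swaps along short alternating walks") with no specified invariant, no argument that a repair at one vertex of $T$ does not create a violation elsewhere, and no termination measure. It is also telling that the hypothesis $\delta\geq 4$, which is essential (cubic graphs of chromatic index $4$ show the statement fails for $\delta=3$), enters your argument only as an unexploited remark about "slack".

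For contrast, the paper's proof does the balancing with two concrete tools. First it reduces to graphs whose degrees all lie in $\{5,7\}$ (Observation~\ref{limit_deg_2}). Then it extracts one colour class via Lemma~\ref{mod_alon} with constant weight $1/3$, which pins the number of colour-$1$ edges at each vertex to $1$ or $2$ (degree $5$) or $2$ or $3$ (degree $7$); the leftover graph $H$ has all degrees in $\{3,4,5\}$, and every component of $H$ either contains an odd-degree vertex or is $4$-regular with an even number of edges, so the Euler splitting of Observation~\ref{euler} gives at most $\lceil d_H(u)/2\rceil$ edges per colour, which fits the budget. If you want to salvage your route, you would need an explicit mechanism (akin to the rounding lemma or to $T$-join/parity arguments) that certifies the "unique peak" property at all odd vertices at once; as written, the proposal is a plan rather than a proof.
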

This result is twofold best possible. Firstly, $4$ cannot be decreased, as exemplify e.g. cubic graphs with chromatic index $4$.
Secondly, $2$ colours are not sufficient e.g. for any graph containing an odd degree vertex.

The main motivation of our research is thus the quest for best possible extension of Theorem~\ref{majority23-res2} towards all $k\geq 3$.
Note first that for any fixed $k\geq 2$, no minimum degree constraint can guarantee the existence of a $\frac{1}{k}$-majority edge-colouring with at most $k$ colours -- it is enough to consider graphs containing vertices of (arbitrarily large) degrees not divisible by $k$. We thus must admit (at least) $k+1$ colours, and in fact the authors of~\cite{majority23} showed that this number of colours is sufficient (and hence, optimal) within our quest. 
 \begin{theorem}[\cite{majority23}]
 For every integer $k\geq 2$, there exists $\delta_k$ such that each
graph $G$ of minimum degree $\delta\geq \delta_k$ has a $\frac{1}{k}$-majority $(k+1)$-edge-colouring.
 \end{theorem}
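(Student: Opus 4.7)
My approach is the standard probabilistic one: apply the Lov\'asz Local Lemma to a uniformly random $(k+1)$-edge-colouring of $G$. Colour each edge independently and uniformly at random with one of the $k+1$ colours. For every vertex $v$ and every colour $i$, introduce the bad event $B_{v,i}$ that strictly more than $\deg(v)/k$ edges at $v$ carry colour $i$. The number of such edges is binomial with $\deg(v)$ trials and mean $\deg(v)/(k+1)$, and the threshold $\deg(v)/k$ equals $(1+1/k)$ times this mean. Hence by the multiplicative Chernoff bound,
\[
\Pr[B_{v,i}]\le\exp\!\left(-\frac{\deg(v)}{3k^2(k+1)}\right)\le\exp\!\left(-\frac{\delta}{3k^2(k+1)}\right).
\]

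Since $B_{v,i}$ depends only on the colours of edges incident with $v$, two bad events $B_{v,i}$ and $B_{w,j}$ are mutually independent whenever $w\notin N[v]$, so each bad event has dependency degree at most $(k+1)(\Delta+1)-1$. The symmetric Lov\'asz Local Lemma then produces a colouring avoiding all $B_{v,i}$ provided $e\cdot\Pr[B_{v,i}]\cdot (k+1)(\Delta+1)\le 1$, i.e.\ provided $\delta\ge 3k^2(k+1)\ln\!\bigl(e(k+1)(\Delta+1)\bigr)=O(k^3\log\Delta)$.

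The principal obstacle is that the bound still involves the maximum degree $\Delta$, not just $k$. To recover the $\delta_k=O(k^3\log k)$ bound mentioned in the paper, I would first perform a preliminary reduction: decompose $E(G)$ into finitely many spanning subgraphs $G_1,\ldots,G_t$, each with minimum degree $\ge\delta/C_1$ and maximum degree $\le C_2\delta$ for absolute constants $C_1,C_2$. Applying the LLL to each $G_s$ separately yields a $\frac{1}{k}$-majority $(k+1)$-edge-colouring of $G_s$, and since the count of colour $i$ at a vertex $v$ is additive across the pieces while each piece contributes at most $\deg_{G_s}(v)/k$, the union of these colourings is automatically a $\frac{1}{k}$-majority $(k+1)$-edge-colouring of $G$. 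With $\Delta$ effectively replaced by $O(\delta)$ in the LLL condition, it becomes $\delta=\Omega(k^3\log(k\delta))$, which is satisfied by $\delta=\Omega(k^3\log k)$.

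The delicate step, and the main technical difficulty of this plan, is producing such a degree-balanced edge decomposition (e.g.\ by iterated random halvings combined with an auxiliary LLL, or by Petersen-style $2$-factor arguments where applicable); everything else is a routine Chernoff-plus-LLL computation. It is precisely to avoid this decomposition bottleneck---and the extra $\log k$ factor that clings to any Chernoff-LLL scheme of this shape---that the paper's subsequent deterministic argument is required to push the bound down to $O(k^2)$.
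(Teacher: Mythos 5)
Your Chernoff-plus-LLL computation is fine as far as it goes (and it is the same general strategy as the cited proof from~\cite{majority23}, which the present paper does not reproduce), but the reduction you defer to at the end is not merely ``delicate''---as stated it is impossible. You ask for a decomposition of $E(G)$ into \emph{spanning} subgraphs $G_1,\dots,G_t$, each with minimum degree at least $\delta/C_1$ and maximum degree at most $C_2\delta$. Summing the minimum-degree condition over the parts at a vertex $v$ of degree exactly $\delta$ gives $\delta=\sum_s \deg_{G_s}(v)\ge t\,\delta/C_1$, hence $t\le C_1$; on the other hand a vertex of degree $\Delta$ forces $t\ge \Delta/(C_2\delta)$ parts to keep every part's maximum degree below $C_2\delta$. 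These two requirements conflict as soon as $\Delta>C_1C_2\delta$, so no such decomposition exists, and the claimed bottleneck step cannot be carried out. (The additivity argument you give for recombining the pieces is exactly why you \emph{need} every part to have large degree at \emph{every} vertex, so you cannot quietly drop the spanning/minimum-degree requirement either.)

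There are two standard repairs. The first is to replace the edge decomposition by vertex splitting, as in Observation~\ref{limit_deg_1} of the paper: a vertex of degree $d\ge 2\delta$ is split into $\lfloor d/\delta\rfloor$ clones, each of degree between $\delta$ and $2\delta$, which bounds $\Delta$ by $2\delta$ in an auxiliary graph whose $\frac1k$-majority colouring pulls back to one of $G$ (one must check the floor arithmetic, as the paper does). The second, and cleaner, repair is to avoid any reduction: the dependency degree of $B_{v,i}$ is not $(k+1)(\Delta+1)-1$ but $(k+1)(\deg(v)+1)-1$, since $B_{w,j}$ shares an edge with $B_{v,i}$ only for $w\in N[v]$, and $|N[v]|=\deg(v)+1$. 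Thus the number of events depending on $B_{v,i}$ grows only linearly in $\deg(v)$ while $\Pr[B_{v,i}]$ decays exponentially in $\deg(v)$, and the general (asymmetric) Lov\'asz Local Lemma with weights $x_{B_{v,i}}=\exp(-\deg(v)/(6k^2(k+1)))$ closes directly once $\delta\ge Ck^3\log k$, with no reference to $\Delta$ at all. Either fix turns your sketch into a complete proof; without one of them the argument has a genuine hole.
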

For $k=2$ this follows by Theorem~\ref{majority23-res2}, while in the remaining cases it was proven by a fairly standard application of the Lov\'asz Local Lemma, which admitted to get the result above with $\delta_k=\Omega(k^3 \log k)$. We believe that much smaller values of $\delta_k$ should allow 
obtaining the same result. Our main objective concerns finding the optimal value of $\delta_k$ and can be formulated as follows.
\begin{problem}\label{GeneralizedMajorityProblem}
For every integer $k\geq 2$, find the least value $\delta_k^{\rm opt}$ such that each
graph $G$ of minimum degree $\delta\geq \delta_k^{\rm opt}$ has a $\frac{1}{k}$-majority $(k+1)$-edge-colouring.
\end{problem}
It seems one needs to use a non-probabilistic argument in order to completely solve this problem, we discuss this issue at the end of the paper. 
In the next section we present the main tool we shall use within our approach, and show how it allows to solve the problem in the environment of bipartite graphs. In Section~\ref{SectionGeneralGraphs} we shall 
in turn provide order-wise tight estimations for $\delta_k^{\rm opt}$ 
in the general case and formulate our main conjecture.
In the following section we also confirm that the conjecture holds for $k\leq 4$. The last section is devoted to a short discussion concerning our results and further perspectives.

\section{Bipartite graphs}

Let us begin with a simple observation implying the lower bound for $\delta_k^{\rm opt}$, also within the family of bipartite graphs.

\begin{observation}\label{ObservationLowerBipartite}
    For every $k\geq 2$ there exist bipartite graphs $G$ with $\delta(G)\geq k^2-k-1$ which are not $\frac{1}{k}$-majority $(k+1)$-edge-colourable. 
\end{observation}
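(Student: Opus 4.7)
The plan is to build the obstruction purely locally: pin down the largest vertex-degree $d$ for which the pigeonhole inequality at a single vertex already precludes the desired colouring, and then take any bipartite graph in which every vertex has that degree (for instance, $K_{d,d}$).

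For the local analysis, consider any vertex $v$ of degree $d$ and write $d=qk+r$ with $0\le r<k$. In a $\frac{1}{k}$-majority $(k+1)$-edge-colouring each of the $k+1$ colour classes at $v$ must have integer size at most $\lfloor d/k\rfloor=q$, so feasibility at $v$ demands $(k+1)q\ge d=qk+r$, equivalently $q\ge r$. Thus $v$ is \emph{locally blocked} precisely when $q<r$, and maximising $d=qk+r$ under the constraints $0\le q<r\le k-1$ forces $r=k-1$ and $q=k-2$, yielding $d=(k-2)k+(k-1)=k^2-k-1$.

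Consequently I would take $G=K_{k^2-k-1,\,k^2-k-1}$, which is bipartite and $(k^2-k-1)$-regular; so $\delta(G)=k^2-k-1$ and every vertex is locally blocked, ruling out any $\frac{1}{k}$-majority $(k+1)$-edge-colouring. There is no real obstacle in this argument: the only point requiring any care is the short arithmetic identifying $k^2-k-1$ as the largest bad degree, and any $(k^2-k-1)$-regular bipartite graph, or more generally any bipartite graph with $\delta\ge k^2-k-1$ containing a vertex of degree exactly $k^2-k-1$, would serve equally well as the witness.
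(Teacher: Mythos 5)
Your proposal is correct and follows essentially the same argument as the paper: a pigeonhole count at a single vertex of degree $k^2-k-1$, where each of the $k+1$ colour classes can have size at most $\lfloor(k^2-k-1)/k\rfloor=k-2$, so at most $(k+1)(k-2)=k^2-k-2<k^2-k-1$ incident edges can be coloured. The only (immaterial) difference is the choice of witness graph — you use $K_{k^2-k-1,\,k^2-k-1}$ while the paper uses $K_{k^2-k,\,k^2-k}$ minus an edge — and both are valid.
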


\begin{proof}
    Let $G$ be a graph containing a vertex $v$ of degree $k^2-k-1$ and suppose $G$ has a $\frac{1}{k}$-majority $(k+1)$-edge-colouring. 
    Then at most 
    $$\left\lfloor\frac{k^2-k-1}{k}\right\rfloor=k-2$$ edges incident with $v$ may have the same colour, and hence at most
	\[ (k+1)(k-2) = k^2-k-2 < d(v) \]
	edges incident with $v$ can be coloured, a contradiction.

Therefore, in particular the complete bipartite graph $K_{k^2-k,k^2-k}$ with a single edge removed is an example of a bipartite graph which is not $\frac{1}{k}$-majority $(k+1)$-edge-colourable and has minimum degree $k^2-k-1$.
 \end{proof}

 We thus must require that $\delta(G)\geq k(k-1)$ in order to have a chance to show that such assumption guarantees that $G$ is $\frac{1}{k}$-majority edge-colourable with $k+1$ colours. In~\cite{majority23} it was actually already proven that $k+2$ colours are sufficient in such a case.
 We shall show that in fact in the case of bipartite graphs, $k+1$ colours always suffice, which, in view of Observation~\ref{ObservationLowerBipartite}, yields an optimal result. 
 	\begin{theorem} \label{bipartiteTheorem}
		For every integer $k \geq 2$, if a bipartite graph $G$ has minimum degree at least $k(k-1)$, then $G$ has a $\frac{1}{k}$-majority $(k+1)$-edge-colouring.
	\end{theorem}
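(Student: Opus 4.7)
The plan is to apply an equitable $(k+1)$-edge-colouring of the bipartite graph $G$ and then verify that the balance condition it provides already encodes the $\frac{1}{k}$-majority requirement. A classical theorem of de Werra guarantees, for every bipartite graph and every positive integer $t$, an edge-colouring with $t$ colours such that at every vertex $v$ each colour class has size $\lfloor d(v)/t\rfloor$ or $\lceil d(v)/t\rceil$. I expect this (or an equivalent statement, phrased perhaps via an Eulerian decomposition of an auxiliary bipartite multigraph, or via an orientation whose in-/out-degrees are balanced) to be exactly the ``main tool'' the authors promise at the end of the introduction; I would simply invoke it with $t=k+1$.

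Given such a colouring, Theorem~\ref{bipartiteTheorem} reduces to the arithmetic inequality $\lceil d/(k+1)\rceil \leq \lfloor d/k\rfloor$ for every $d \geq k(k-1)$, since $\lfloor d/k\rfloor$ is precisely the cap imposed by the $\frac{1}{k}$-majority definition at a vertex of degree $d$. Writing $d = qk + r$ with $0 \leq r \leq k-1$ gives $\lfloor d/k\rfloor = q$, and $\lceil d/(k+1)\rceil \leq q$ is equivalent to $d \leq q(k+1)$, i.e.\ $r \leq q$. The hypothesis $d \geq k(k-1)$ forces $q \geq k-1 \geq r$, so the inequality holds and the colouring is $\frac{1}{k}$-majority.

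The only substantive step is producing the equitable colouring; everything else is bookkeeping and this is where I expect the actual work to sit. A direct route is to augment $G$ to a bipartite multigraph in which every vertex degree is a multiple of $k+1$ by adding auxiliary edges across the bipartition, decompose the result into $k+1$ spanning subgraphs of equal degree at every vertex via an Euler-circuit argument on the components of even total degree (with a small parity fix if needed), and finally erase the auxiliary edges while observing that each colour class can shrink by at most one at any vertex. The slack between $k+1$ colours and the $1/k$ threshold is exactly what makes this work: the single extra colour absorbs all rounding losses, and the threshold $k(k-1)$ is tight precisely because below it the rounding loss in $\lceil d/(k+1)\rceil$ exceeds the $\lfloor d/k\rfloor$ budget, matching Observation~\ref{ObservationLowerBipartite}.
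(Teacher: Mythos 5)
Your argument is correct, and it reaches the goal by a different key lemma than the paper does. You reduce the theorem to the existence of an \emph{equitable} $(k+1)$-edge-colouring (each colour appearing $\lfloor d(v)/(k+1)\rfloor$ or $\lceil d(v)/(k+1)\rceil$ times at every $v$) plus the arithmetic fact that $\lceil d/(k+1)\rceil\leq\lfloor d/k\rfloor$ whenever $d\geq k(k-1)$; your verification of that inequality (writing $d=qk+r$ and noting $r\leq k-1\leq q$) is exactly right, and the majority condition is indeed only an upper-bound requirement, so this suffices. For the equitable colouring you invoke de Werra's classical theorem on balanced edge-colourings of bipartite (multi)graphs, which is a legitimate and well-known result. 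The paper instead manufactures the same equitable property by iteratively applying its refinement of the Alon--Wei lemma with constant weights $1/i$, using condition (iii) of that lemma (vacuous for bipartite graphs, which have no odd cycles) to pin down the colour-class sizes exactly when degrees are divisible; the net effect of its Claims 1--3 is precisely the $\{\lfloor d/(k+1)\rfloor,\lceil d/(k+1)\rceil\}$ window you get from de Werra. So your route is shorter and more citation-based, while the paper's choice is deliberate: the same lemma is the engine of the non-bipartite results later on, and the bipartite proof serves to introduce it.

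One small caution about your self-contained sketch of the equitable colouring: after augmenting to a multigraph with all degrees divisible by $k+1$ and decomposing perfectly, it is not true that deleting the auxiliary edges shrinks each colour class at a vertex by at most one --- several auxiliary edges at $v$ may land in the same class. This does not hurt you, since deletion can only decrease class sizes and you only need the upper bound $\lceil d(v)/(k+1)\rceil$, but you do need to add the auxiliary edges minimally at each vertex (at most $k$ per vertex) so that the perfect classes have size exactly $\lceil d(v)/(k+1)\rceil$ rather than more; balancing the two sides of the bipartition when realising the auxiliary degrees therefore needs a little care. Citing de Werra outright avoids the issue entirely.
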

In order to prove Theorem~\ref{bipartiteTheorem} we shall make use of Lemma~\ref{mod_alon} below. 
 This was essentially proven by Alon and Wei~\cite{alon}.
We present a slight, yet very useful refinement of their result.
We also include its full proof for the sake of completeness.
We say a cycle is \emph{odd} if it has odd length.
Moreover, cycles $C_1,\ldots,C_t$ in a graph $G$ are called \emph{independent} if for any $i\neq j$ there is no vertex of $C_i$ adjacent to a vertex of $C_j$ in $G$. (Note that such cycles are in particular pairwise distinct.)
    \begin{lemma} \label{mod_alon}
		Let $G = (V,E)$ be a graph, and let $z:E\rightarrow [0,1]$ be a weight function assigning to each edge $e\in E$ a real weight $z(e)$ in $[0,1]$. Then there is a function $x: E\rightarrow \{0,1\}$ assigning to each edge an integer value in $\{0,1\}$ such that 
\begin{itemize}
    \item[(i)] $\sum\limits_{e\ni v} z(e) -1 < \sum\limits_{e\ni v} x(e) \leq \sum\limits_{e\ni v} z(e) +1$ for every $v\in V$;
    \item[(ii)] if $\sum\limits_{e\ni u} x(e) < \sum\limits_{e\ni u} z(e)$ and $\sum\limits_{e\ni v} x(e) < \sum\limits_{e\ni v} z(e)$ for some $uv\in E$, then $x(uv)=1$;
    \item[(iii)] each vertex $v$ with $\sum\limits_{e\ni v} x(e)=\sum\limits_{e\ni v} z(e)+1$ belongs to an odd cycle $C_v$ with $\sum\limits_{e\ni u} z(e)\in\mathbb{Z}$ for every vertex $u$ of $C_v$, and moreover all such cycles are independent in $G$.
\end{itemize}
	\end{lemma}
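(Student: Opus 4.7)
Set $s(v):=\sum_{e\ni v}z(e)$ and $t(v):=\sum_{e\ni v}x(e)$. Any edge with $z(e)\in\{0,1\}$ can be rounded to $x(e)=z(e)$ and then discarded (it contributes exactly $z(e)$ to each endpoint and does not affect any of (i)--(iii)), so I may assume $z(e)\in(0,1)$ throughout. Note that (i) then amounts to $t(v)\in\{\lfloor s(v)\rfloor,\lceil s(v)\rceil\}$ when $s(v)\notin\mathbb{Z}$, and $t(v)\in\{s(v),s(v)+1\}$ when $s(v)\in\mathbb{Z}$; in particular the only regime where the $+1$ overshoot in (iii) is even meaningful is at integer-$s$ vertices.

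Existence of \emph{some} $x$ satisfying only (i) is essentially the Alon--Wei rounding, which I would reprove by an Eulerian-decomposition argument: attach a new vertex $w$ to every odd-degree vertex of $G$, take an Eulerian circuit of each component of the resulting even-degree graph, and delete the edges at $w$ to partition $E$ into trails (with endpoints at the original odd-degree vertices) and closed circuits. On each trail or closed circuit, traverse the edges in order and greedily set $x(e_i)\in\{0,1\}$ so that the running partial error $R_j=\sum_{i\le j}(x(e_i)-z(e_i))$ stays in $(-1,1)$; since $z(e_i)\in(0,1)$, at least one of the two choices always works. Each internal vertex of the trail sees two consecutive edges, and a careful local analysis (with the closed odd circuits being the only source of a forced $+1$ overshoot at a single vertex) delivers (i).

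To upgrade to (ii) and (iii) I would invoke a \emph{two-level extremal principle}: among all $x$ satisfying (i), first minimise the number of vertices with $t(v)=s(v)+1$, and then, subject to this, maximise $\sum_e x(e)$. Condition (ii) is then immediate: if $x(uv)=0$ and both $u,v$ are strictly under-rounded, flipping $x(uv)$ to $1$ preserves (i), creates no new vertex of type $t=s+1$ (each endpoint moves from $<s$ up to at most $s$), and strictly increases $\sum_e x(e)$, contradicting the second-level extremality. For (iii), fix any $v$ with $t(v)=s(v)+1$ (so $s(v)\in\mathbb{Z}$) and look along an incident edge $vu_1$ with $x(vu_1)=1$: if $t(u_1)>s(u_1)$, flipping $vu_1$ to $0$ decreases the count of $+1$-vertices, contradicting first-level extremality; hence $t(u_1)=s(u_1)$ and $s(u_1)\in\mathbb{Z}$. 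Continuing from $u_1$ along an edge with $x=0$ and arguing similarly (any non-tight or non-integer endpoint would allow a two-edge flip that decrements the $+1$-count), one traces an alternating walk $v,u_1,u_2,\ldots$ of alternating $x$-values $1,0,1,0,\ldots$ through tight integer-$s$ vertices. The walk must eventually close, and a parity check against the initial boundary condition $t(v)=s(v)+1$ forces the closed walk to have odd length, whence an odd cycle $C_v$ with the required properties can be extracted.

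\textbf{Main obstacle.} The independence clause of (iii). If two extracted cycles $C_v,C_{v'}$ share a vertex or are joined by an edge of $G$, I would toggle $x$ along the symmetric difference of their edge sets (or along a short connecting alternating path). The critical technical point is to show that the resulting rounding $x'$ still satisfies (i) and has strictly fewer vertices with $t=s+1$, contradicting the first-level extremality; the fact that every vertex involved has integer $s$ and tight $t$ is what makes this toggle genuinely eliminate a $+1$-vertex rather than merely relocate it. Handling this elimination cleanly, and hence justifying the two-level (rather than single-level) extremal choice, is where the main technical care must be concentrated.
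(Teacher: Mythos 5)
There is a genuine gap, and it sits in the very first step. Your greedy rounding along Eulerian trails controls the \emph{running} error $R_j=\sum_{i\le j}(x(e_i)-z(e_i))$, but condition (i) is about the error \emph{per vertex}, and a trail visits a vertex $v$ of degree $d$ roughly $d/2$ times. If the $i$-th visit uses the consecutive edges $e_{j_i},e_{j_i+1}$, the error accumulated at $v$ is $\sum_i\bigl(R_{j_i+1}-R_{j_i-1}\bigr)$; each summand only lies in $(-2,2)$, the indices do not telescope because the trail wanders elsewhere between visits, and nothing prevents the sum from drifting far outside $(-1,1]$. (The Eulerian argument does work for $z\equiv\tfrac12$ -- that is exactly Observation 9 of the paper -- precisely because there each visit contributes \emph{exactly} one edge to each colour; for general $z$ this mechanism is absent.) So the proposal does not establish (i), which is the heart of the lemma. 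The paper's route is entirely different: it keeps the values fractional and repeatedly perturbs them along linear dependencies of the incidence-matrix columns (equivalently, rotates along even closed walks in the fractional support), which preserves \emph{every} vertex sum exactly while integralising edges one at a time, until the fractional support consists of isolated edges and vertex-disjoint odd cycles; only the final rounding of the half-integral odd cycles creates the $+1$ overshoot, which is what makes (iii) come out for free.

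Your treatment of (ii) by flipping an under-rounded edge is exactly the paper's final step and is fine. But the two-level extremal argument for (iii) is also incomplete at the point you yourself flag: you need that toggling along the symmetric difference of two non-independent odd cycles (or a connecting alternating path) preserves (i) and strictly decreases the number of overshooting vertices, and this requires checking that no vertex on the toggled edge set is pushed below $s(v)-1$ or up to a new $s(v)+1$; the paper avoids this entirely by enforcing independence \emph{before} rounding, while all cycle edges still carry the value $\tfrac12$, where the toggle manifestly changes no vertex sum. The parity step extracting an odd cycle of tight integer-$s$ vertices from your alternating closed walk is also only asserted. As it stands, the argument proves neither (i) nor the independence clause of (iii).
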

 \begin{proof} 
    If $z(e) \in \{0,1\}$ for any edge $e$ of $G$, then fix $x(e) = z(e)$. Let $G'$ be a subgraph of $G$ obtained by removing all edges of $G$ for which $z(e)$ is an integer. Consider an incidence matrix $M$ of $G'$. For every edge $e\in E(G')$ let $y_e$ be its corresponding column in $M$. Note that if $G'$ contains a closed walk of even length whose every two consecutive edges are distinct from each other, then the columns of $M$ are linearly dependent over reals, i.e. there exist real numbers 
    $\alpha_e$, $e\in E(G')$, such that
    \[ \sum_{e\in E(G')} \alpha_e y_e = \overline{0} \]
    and at least one $\alpha_e$ 
    is nonzero. 

    Note that for any nonzero real number $c$, if we modify all of the values $x(e)$ by $c\alpha_e$, then the sum $\sum_{e\ni v} x(e)$ remains the same for all vertices $v$, but the value of $x(e)$ shall change for at least one edge $e$. Therefore we can choose the value of the coefficient $c$ so that all modified values of $x(e)$ remain in $[0,1]$, but at least one of them gets an integer. Remove all integer valued edges from the graph $G'$ and continue with the same procedure until $G'$ does not contain an even closed walk with no repeated consecutive edges. Note every component of the resulting graph $G'$ contains
    at most one cycle, which has to be odd.

    Observe that at this point for all vertices $v$ of $G$ we have
    \[ \sum_{e\ni v} x(e) = \sum_{e\ni v} z(e) \]
    and for all edges $e$ of $G'$ the value of $x(e)$ is in the open interval $(0,1)$. Hence, after modifying each of these values to an integer in $\{0,1\}$,
    for all vertices $v$ with $d_{G'}(v)\leq 1$ we shall have
    \[ \sum\limits_{e\ni v} z(e) -1 < \sum\limits_{e\ni v} x(e) < \sum\limits_{e\ni v} z(e) +1 \text{.} \]
    Thus, in the following step we shall focus on modifying the values of $x(e)$ for the edges $e$ of $G'$ in such a way that 
    \[ \sum_{e\ni v} x(e) = \sum_{e\ni v} z(e) \]
    for all vertices $v$ with $d_{G'}(v)\geq 2$.

    Suppose $G'$ has a component containing both a vertex of degree $1$ and a vertex of degree at least $2$. 
    Consider the system of linear equations
    \[ \sum_{e\ni v} x(e) = \sum_{e\ni v} z(e) \]
    for all vertices $v$ of the component which have degrees at least $2$, where the variables are $x(e)$ for all edges $e$ of this component. The number of variables in this system is greater than the number of equations, hence its solution set is infinite. 
    We choose a solution such that all values of $x(e)$ remain in the interval $[0,1]$ and at least one of them is an integer. We then remove integer valued edges from $G'$. We repeat this procedure until all components of $G'$ are odd cycles or isolated edges. 
    
    For all isolated edges $e$ of $G'$ we can then simply set $x(e)=1$.

    The remaining edges $e$ with non-integer values of $x(e)$ induce disjoint odd cycles in $G$. By previous arguments all vertices $v$ of such cycles satisfy
    \[ \sum_{e\ni v} x(e) = \sum_{e\ni v} z(e) \text{.} \]

    We call an odd cycle \emph{bad} if $x(e)=1/2$ for all its edges $e$. 
    Note $\sum_{e\ni v} z(e)\in \mathbb{Z}$ for every vertex $v$ of any bad cycle in $G'$.
    We shall show that we may modify $G'$ so that all its bad cycles are independent in $G$. Suppose there are bad (odd) cycles $C_1,C_2$ in $G'$ joined by an edge, say $e_0$ in $G$. Let $H$ be the subgraph of $G$ induced by $e_0$ and the edges of $C_1,C_2$. Then it is straightforward to notice that there exist $\alpha_e\in\{-1,1\}$, $e\in C_1\cup C_2$, such that
    \[ \alpha_{e_0}y_{e_0} + \sum_{e\in C_1\cup C_2}\alpha_ey_e = \overline{0} \]
    where $\alpha_{e_0}=2$.
    (It suffices to alternately set $\alpha_e$ to $-1$ and $1$ along both of the cycles, starting from a vertex of $e_0$.) Since $x(e)=1/2$ for $e\in C_1\cup C_2$ and $x(e_0)\in\{0,1\}$, by adding $\alpha_e/2$ or $-\alpha_e/2$ (depending on $x(e_0)$) to all $x(e)$, $e\in E(H)$ we shall thus not change the sum $\sum_{e\ni v} x(e)$ at any vertex $v$ while all edges of $H$ shall get integer valued. We shall thus remove all these edges from $G'$. We repeat this procedure until all bad cycles of $G'$
    are independent in $G$.

    For each cycle $C$ of $G'$ we then proceed as follows. 
    If $C$ is bad, we denote any of its vertices as $v$.
    Then for all edges $e$ of $C$ we round the value of $x(e)$ to the nearest integer with the additional restriction that if for the two edges $e',e''$ incident to any given vertex $u$ 
    in $C$ we had $x(e') = 1/2 = x(e'')$, then one of these values must be rounded down to $0$ and the other one rounded up to $1$ if $u\neq v$, while we round both values up to $1$ for $u=v$.
    As a result we obtain a function $x$ such that $\sum\limits_{e\ni v} x(e) = \sum\limits_{e\ni v} z(e) +1$ 
    and 
    \[ \sum\limits_{e\ni u} z(e) -1 < \sum\limits_{e\ni u} x(e) < \sum\limits_{e\ni u} z(e) +1 \]
    for all the remaining vertices $u$ of $C$ (other than $v$). Thus, since bad cycles of $G'$ were independent in $G$, we obtain a function $x: E\rightarrow \{0,1\}$ satisfying (i) and (iii).

    Finally, we shall show that we can further modify the function $x$ so that it also satisfies (ii). Assume (ii) is not satisfied and there exists an edge $uv \in E$ with $x(uv)=0$ such that $\sum\limits_{e\ni u} x(e) < \sum\limits_{e\ni u} z(e)$ and $\sum\limits_{e\ni v} x(e) < \sum\limits_{e\ni v} z(e)$. If we modify the value of $x(uv)$ to $1$, then $\sum\limits_{e\ni u} x(e) < \sum\limits_{e\ni u} z(e) +1$ and $\sum\limits_{e\ni v} x(e) < \sum\limits_{e\ni v} z(e) +1$, hence (i) and (iii) are still satisfied, but there are less edges that contradict (ii). Therefore, we can construct in this manner a function $x: E\rightarrow \{0,1\}$ satisfying all three conditions (i) -- (iii).
 \end{proof}

	\begin{proof} [Proof of Theorem \ref{bipartiteTheorem}]
		Let $G=(V,E)$ be a bipartite graph with $\delta(G)\geq k(k-1)$, $k\geq 2$. 
  
  Set $\overline{G}_{k+1} = G$. 
  
  For $i=k+1,k,\ldots,2$ let further $G_i$ be a subgraph of $\overline{G}_{i}$ obtained via applying to it Lemma \ref{mod_alon} with a constant weight function $z_i(e)=\frac{1}{i}$
  and setting $E(G_i)=\{e\in E(\overline{G}_{i}): x_i(e)=1\}$  
   where $x_i:E(\overline{G}_{i})\to\{0,1\}$ is a function resulting from the lemma; 
  let also $\overline{G}_{i-1} = (V(G), E(\overline{G}_i) \setminus E(G_i))$.

  Finally, set $G_1 = \overline{G}_1$.
   
  We shall prove that for every $i \in \{1,\dotsc,k+1\}$ and each vertex $v\in V$
		\begin{equation} \label{BipartiteDegreIneq}
                d_{G_i}(v) \leq \left\lfloor \frac{d_G(v)}{k} \right\rfloor, 
            \end{equation}
  and thus the edges of $G_1,\ldots,G_{k+1}$ partition $E$ to $k+1$ colour classes inducing a  
  $\frac{1}{k}$-majority $(k+1)$-edge-colouring of $G$.
  In other words, the colouring $c:E\to\{1,\ldots,k+1\}$ can be defined by setting $c(e)=i$ iff $e\in E(G_i)$.
		
		Let $v$ be an arbitrary vertex of $G$ and let 
  $$d_G(v) = (k+1) l + j$$ 
  where $j\in \{0,\dotsc,k\}$. Since $d_G(v) \geq \delta(G) \geq k(k-1)$ we have that $l\geq k-1$ or $l=k-2$ and $j\geq 2$. Hence
		\begin{equation} \label{l+1-Estimation}
  \left\lfloor \frac{d_G(v)}{k} \right\rfloor = \left\lfloor \frac{(k+1) l + j}{k} \right\rfloor = l+ \left\lfloor \frac{l + j}{k} \right\rfloor \geq l+1
  \end{equation}
		unless $l=k-1$ and $j=0$. 
  Since $G$ is bipartite, none of $G_i$ contains odd cycles. Thus,
  Lemma~\ref{mod_alon} (iii), exploited to construct each $G_i$, guarantees the following to hold.
  \begin{claim}\label{Claim1-bipartite}
If $d_{\overline{G}_{t}}(v) = td$ for some $t\in\{1,\ldots,k+1\}$ and $d\in\mathbb{Z}$, hence $\sum\limits_{e\ni v} z_t(e)=d$, then $d_{G_t}(v) = d$.
  \end{claim}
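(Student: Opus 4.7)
The plan is to derive Claim \ref{Claim1-bipartite} directly from Lemma \ref{mod_alon}, relying on the bipartiteness of $G$ to rule out one of the two possible values of $d_{G_t}(v)$. Since the weight function applied on $\overline{G}_t$ is the constant $z_t \equiv 1/t$, the hypothesis $d_{\overline{G}_t}(v) = td$ translates exactly to $\sum_{e\ni v}z_t(e)=d\in\mathbb{Z}$. Condition (i) of Lemma \ref{mod_alon} then gives
$$d-1 \;<\; d_{G_t}(v) \;=\; \sum_{e\ni v} x_t(e) \;\leq\; d+1,$$
so necessarily $d_{G_t}(v) \in \{d, d+1\}$.

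The remaining task is to exclude the case $d_{G_t}(v) = d+1$. Here condition (iii) of the lemma does the work: if this equality held, $v$ would lie on an odd cycle $C_v$ of $\overline{G}_t$. But $\overline{G}_t$ is a subgraph of $G$, which is bipartite, and hence contains no odd cycles at all. This contradiction forces $d_{G_t}(v) = d$, as claimed.

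In this sense there is no real obstacle to overcome: the content of the claim is essentially packaged inside part (iii) of Lemma \ref{mod_alon}, which was refined precisely so that the overshoot $d_{G_t}(v)=d+1$ can only occur at vertices lying on odd cycles whose vertices all have integral $z$-sums. The bipartiteness assumption on $G$ annihilates this exceptional case uniformly for every $t\in\{2,\ldots,k+1\}$, yielding the claim. (Note that the case $t=1$ is also covered trivially, since $G_1=\overline{G}_1$ gives $d_{G_1}(v)=d_{\overline{G}_1}(v)=d$ whenever $d_{\overline{G}_1}(v)=d$.)
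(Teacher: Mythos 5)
Your proof is correct and follows exactly the paper's argument: condition (i) of Lemma~\ref{mod_alon} pins $d_{G_t}(v)$ to $\{d,d+1\}$, and condition (iii) rules out the overshoot $d+1$ because the graph to which the lemma is applied is a subgraph of the bipartite graph $G$ and hence contains no odd cycles. Your remark on the trivial case $t=1$ (where $G_1=\overline{G}_1$) is also consistent with the paper's construction.
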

 This almost immediately yields~(\ref{BipartiteDegreIneq}) in the case when $d_G(v)$ is divisible by $k+1$. It is sufficient to apply $k$ times Claim~\ref{Claim1-bipartite} to obtain the following.
 \begin{claim}\label{Claim2-bipartite}
     If $j=0$, i.e. $d_G(v) = (k+1)l$, then $d_{G_i}(v) = l$ for all $i$.
 \end{claim}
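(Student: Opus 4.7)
The plan is to prove Claim~\ref{Claim2-bipartite} by downward induction on $i$ from $k+1$ to $1$, with the invariant that $d_{\overline{G}_i}(v) = i \cdot l$. Once this invariant is established at stage $i \geq 2$, Claim~\ref{Claim1-bipartite} applied with $t = i$ and $d = l$ will immediately deliver $d_{G_i}(v) = l$, and the identity $E(\overline{G}_{i-1}) = E(\overline{G}_i) \setminus E(G_i)$ will propagate the invariant to stage $i-1$ via
\[
d_{\overline{G}_{i-1}}(v) \;=\; d_{\overline{G}_i}(v) - d_{G_i}(v) \;=\; il - l \;=\; (i-1)l.
\]

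For the base case I would simply note that $\overline{G}_{k+1} = G$ by construction, so $d_{\overline{G}_{k+1}}(v) = d_G(v) = (k+1)l$ by the hypothesis $j=0$. Then one application of Claim~\ref{Claim1-bipartite} with $t=k+1$ and $d=l$ gives $d_{G_{k+1}}(v)=l$, establishing both halves of the invariant at the top level. The inductive step then iterates exactly this argument $k$ times, as the excerpt itself anticipates ("It is sufficient to apply $k$ times Claim~\ref{Claim1-bipartite}"). Finally, for $i=1$ one uses the definition $G_1 = \overline{G}_1$, so the conclusion $d_{G_1}(v) = d_{\overline{G}_1}(v) = l$ requires no further appeal to the lemma.

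I do not expect any genuine obstacle here: all of the work has already been done by Claim~\ref{Claim1-bipartite}, whose applicability is legitimate at every stage because $\overline{G}_i$ is a subgraph of the bipartite graph $G$ (so it contains no odd cycle, which is the hypothesis that allows Lemma~\ref{mod_alon}(iii) to be bypassed inside Claim~\ref{Claim1-bipartite}), and because the inductive invariant guarantees that the constant weight function $z_i \equiv 1/i$ yields integer vertex-sums equal to $l$. The whole argument is therefore a clean, mechanical iteration rather than a proof requiring any new idea.
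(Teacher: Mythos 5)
Your proof is correct and is exactly the argument the paper intends: the paper's one-line justification ("apply Claim~\ref{Claim1-bipartite} $k$ times") is precisely your downward induction maintaining $d_{\overline{G}_i}(v)=il$ for $i=k+1,\dotsc,1$, with the case $i=1$ handled by the definition $G_1=\overline{G}_1$. No gaps.
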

 Then however, we have that for every $i \in \{1,\dotsc,k+1\}$,
		\[ d_{G_i}(v) = l = \left\lfloor \frac{kl}{k} \right\rfloor \leq \left\lfloor \frac{d_G(v)}{k} \right\rfloor \text{,} \]
  and thus~(\ref{BipartiteDegreIneq}) follows.

		 It remains to prove~(\ref{BipartiteDegreIneq}) in the case when $j\neq 0$. By~(\ref{l+1-Estimation}) it is sufficient to show that $d_{G_i}(v)\leq l+1$ for every $i$. This is implied by $k$ times repeated application of the following claim.
\begin{claim}
  If $d_G(v) = (k+1)l + j$ and $j\neq 0$,   
  then for every $t \in \{2,\dotsc,k+1\}$, if $d_{\overline{G}_t}(v) \in \{ tl, tl+1, \dotsc, t(l+1) \}$, then $d_{G_t}(v) \in \{l,l+1\}$ and $d_{\overline{G}_{t-1}}(v) \in \{(t-1)l,(t-1)l+1,\dotsc,(t-1)(l+1)\}$.
\end{claim}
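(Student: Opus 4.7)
The plan is to parameterise $d_{\overline{G}_t}(v)$ inside the assumed range and then read off the conclusion directly from Lemma~\ref{mod_alon}(i), with Claim~\ref{Claim1-bipartite} handling the integer-endpoint subtleties. I would write $d_{\overline{G}_t}(v) = tl + s$ with $s \in \{0,1,\ldots,t\}$. Recalling that $G_t$ is produced by applying Lemma~\ref{mod_alon} to $\overline{G}_t$ with the constant weight $z_t \equiv 1/t$, we have $\sum_{e\ni v} z_t(e) = l + s/t$, so part (i) of the lemma will give
\[ l + s/t - 1 \;<\; d_{G_t}(v) \;\leq\; l + s/t + 1. \]

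For $0 < s < t$ the bounds are non-integer, and since $d_{G_t}(v)$ is an integer I can immediately read off $d_{G_t}(v) \in \{l, l+1\}$. The delicate cases will be $s=0$ (where (i) alone permits $\{l, l+1\}$) and $s=t$ (where (i) alone permits $\{l+1, l+2\}$). In both of these $\sum_{e\ni v} z_t(e)$ is an integer, and bipartiteness of $G$ prevents $G_t$ from containing any odd cycle, so Claim~\ref{Claim1-bipartite} will pin down $d_{G_t}(v) = l$ and $d_{G_t}(v) = l+1$ respectively. Combining the three cases yields $d_{G_t}(v) \in \{l, l+1\}$ unconditionally.

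The second conclusion will then follow by subtraction: $d_{\overline{G}_{t-1}}(v) = d_{\overline{G}_t}(v) - d_{G_t}(v) = tl + s - d_{G_t}(v)$, which equals either $(t-1)l + s$ or $(t-1)l + s - 1$. A short case check against $s \in \{0,\ldots,t\}$ and $d_{G_t}(v) \in \{l, l+1\}$ shows this integer lies in $\{(t-1)l, \ldots, (t-1)(l+1)\}$ except in two boundary configurations: $s=0$ paired with $d_{G_t}(v)=l+1$ (yielding $(t-1)l - 1$), and $s=t$ paired with $d_{G_t}(v)=l$ (yielding $(t-1)(l+1)+1$). But these are precisely the configurations just ruled out by Claim~\ref{Claim1-bipartite}, so nothing else can go wrong.

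The one place I expect a reader to pause is this coupling between the endpoint cases and Claim~\ref{Claim1-bipartite}: it is the only spot where the bipartite hypothesis is genuinely used in the step. Everything else is bookkeeping, and the hypothesis $j\neq 0$ plays no direct role in the inductive step itself; it enters only via the base case $t=k+1$, where one has $s=j\in\{1,\ldots,k\}$ placing $d_G(v)$ in $\{(k+1)l,\ldots,(k+1)(l+1)\}$, the $j=0$ case being already handled separately by Claim~\ref{Claim2-bipartite}.
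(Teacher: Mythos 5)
Your argument is correct and follows essentially the same route as the paper: the endpoint cases $d_{\overline{G}_t}(v)=tl$ and $d_{\overline{G}_t}(v)=t(l+1)$ are settled by Claim~\ref{Claim1-bipartite}, the interior cases by Lemma~\ref{mod_alon}(i), and the range of $d_{\overline{G}_{t-1}}(v)$ follows by subtraction. Your explicit identification of the two boundary configurations that Claim~\ref{Claim1-bipartite} must exclude, and the remark on where $j\neq 0$ actually enters, are accurate refinements of the paper's terser presentation.
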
   
\begin{proof}
 Note that analogously as above, if $d_{\overline{G}_t}(v) = tl$ (respectively, $t(l+1)$), then by Claim~\ref{Claim1-bipartite}, $d_{G_t}(v) = l$ (respectively, $l+1$) and $d_{\overline{G}_{t-1}}(v) = (t-1)l$ (respectively, $(t-1)(l+1)$). 
 
 In the remaining cases, $d_{G_t}(v) \in \{l,l+1\}$ by Lemma~\ref{mod_alon} (i), and thus  $d_{\overline{G}_{t-1}}(v) \in \{(t-1)l,(t-1)l+1,\dotsc,(t-1)(l+1)\}$. 
 \end{proof}
	\end{proof}

\section{General graphs}\label{SectionGeneralGraphs}

The main obstacle in obtaining a similar result as in Theorem~\ref{bipartiteTheorem} for the general case, not only for bipartite graphs, is that we cannot show Claim~\ref{Claim1-bipartite} to hold any more if a graph has odd cycles, as the proof of this fact relied on Lemma~\ref{mod_alon} (iii). Actually, this inconvenience shall have much further reaching consequences than we initially suspected, and shall (most likely) disallow us obtaining sharp results in most of the general cases.

Before we discuss our upper bounds, let us however first demonstrate that it is after all not that surprising we could not solve this apparent sole obstacle on the way towards extending Theorem~\ref{bipartiteTheorem} to all graphs, as the upper bound in it does not hold any more in general.

\begin{observation}\label{ExampleGeneral}
    For every $k\geq 2$ there exists a graph $G$ with $\delta(G)\geq k^2-1$ which is not $\frac{1}{k}$-majority $(k+1)$-edge-colourable. 
\end{observation}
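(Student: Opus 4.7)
The single-vertex local argument behind Observation~\ref{ObservationLowerBipartite} cannot be pushed past $\delta = k^2-k-1$, since at a vertex of degree $k^2-1$ we have $(k+1)\lfloor (k^2-1)/k\rfloor = (k+1)(k-1) = k^2-1$, meeting the bound with equality. I therefore plan to use a global argument exploiting the following rigidity: in any valid colouring, at every vertex $v$ with $d(v)=k^2-1$ each of the $k+1$ colours must appear on exactly $k-1$ edges incident to $v$. Combined with the standard incidence identity
\[
2|E_i(G[S])| + |E_i(S, V\setminus S)| = \sum_{v\in S} a_i(v),
\]
applied to a subset $S \subseteq V(G)$ of ``tight'' vertices (those of degree $k^2-1$), this forces $|E_i(S, V\setminus S)| \equiv (k-1)|S| \pmod 2$ to hold simultaneously for every colour $i\in\{1,\ldots,k+1\}$.

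For $k=2$ the Petersen graph supplies the counterexample: it is cubic and has chromatic index $4$, and on a cubic graph a $\frac{1}{2}$-majority $3$-edge-colouring coincides with a proper $3$-edge-colouring. For $k \geq 2$ even, I would build $G_k$ as follows: start from $K_{k^2+1}$, delete a near-perfect matching of size $k^2/2$ (leaving one vertex $u$ unmatched), and additionally delete one edge incident to $u$, producing a graph on $k^2+1$ vertices with exactly one vertex of degree $k^2-2$ and all others of degree $k^2-1$; then take two disjoint copies of this graph and join the two low-degree vertices by a single bridge. The resulting $G_k$ is $(k^2-1)$-regular, and setting $S$ equal to the vertex set of one copy gives $|S|=k^2+1$ (odd, since $k$ is even) and $m := |E(S,V\setminus S)|=1$. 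The identity above then forces $|E_i(S,V\setminus S)|$ to be odd, hence at least $1$, for each of the $k+1$ colours, so $m \geq k+1 \geq 3$, contradicting $m=1$.

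The case of $k$ odd will be more delicate: $(k-1)|S|$ is automatically even, and in fact every $(k^2-1)$-regular graph is $\frac{1}{k}$-majority $(k+1)$-edge-colourable, because by iterating Petersen's $2$-factor theorem it admits a $2$-factorisation whose $(k^2-1)/2$ factors can be regrouped into $k+1$ spanning $(k-1)$-regular subgraphs. Thus for odd $k$ the extremal example must be non-regular, with a carefully placed small number of vertices of degree strictly above $k^2-1$. The hard part of the proof will be to arrange such a non-tight vertex $u$ so that the variant identity $|E_i(S,V\setminus S)| \equiv a_i(u) \pmod 2$, together with the rigidity of the colour distribution at $u$ coming from the near-equality $(k+1)\lfloor d(u)/k\rfloor$ being only slightly larger than $d(u)$, propagates to a genuine contradiction across the cut.
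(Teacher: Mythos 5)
Your argument for even $k$ is correct and complete: the bridged double of ($K_{k^2+1}$ minus a near-perfect matching minus one further edge at the unmatched vertex) is $(k^2-1)$-regular, every vertex is tight so each colour appears exactly $k-1$ times at each vertex, the incidence identity forces each of the $k+1$ colour classes to meet the cut an odd number of times, and the cut has only one edge. (The Petersen graph for $k=2$ is also fine, though subsumed by your general even construction.) However, the statement is claimed for every $k\geq 2$, and for odd $k$ you have a plan rather than a proof. You correctly observe that no $(k^2-1)$-regular counterexample can exist when $k$ is odd -- Petersen's $2$-factorisation regroups into $k+1$ spanning $(k-1)$-regular subgraphs -- but you then only say that the ``hard part'' will be to place a non-tight vertex so that a parity identity propagates to a contradiction, without exhibiting such a graph or carrying out that step. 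As written, half of the claimed range is unproven, so there is a genuine gap.

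The missing idea is simpler than your sketch suggests and handles both parities uniformly. Take $K_{k^2+1}$, delete a Hamilton cycle (so all $k^2+1$ vertices drop to degree $k^2-2$), and add a single apex vertex $u$ joined to all of them. The old vertices are tight of degree $k^2-1$, so each colour appears exactly $k-1$ times at each; the apex has degree $k^2+1=(k+1)(k-1)+2$, so some colour $\alpha$ must appear exactly $k=\lfloor (k^2+1)/k\rfloor$ times at $u$. The degree sum of the colour-$\alpha$ subgraph is then $(k^2+1)(k-1)+k$, which is odd for every $k$ (check both parities), a contradiction. This is precisely your ``variant identity'' applied with $S=V(G)$: the cut is empty, and no case distinction on the parity of $k$ is needed. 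If you prefer to keep your cut-based route for odd $k$, you would need to construct and verify an explicit non-regular example along the lines you describe; until then the observation is only established for even $k$.
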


\begin{proof}
    For every fixed $k\geq 2$ we construct a graph $G$ as follows.
    We first take 
    the complete graph on $k^2+1$ vertices 
    and remove the edges of any fixed Hamilton cycle from it.
    Then we add a new vertex to the constructed graph and connect it with single edges with all the remaining vertices. Note that the obtained graph $G$ has $k^2+1$ vertices of degree $k^2-1$ and a single vertex of degree $k^2+1$. Suppose there is a $\frac{1}{k}$-majority $(k+1)$-edge-colouring of $G$. Since $k^2-1=(k+1)(k-1)$ and at most 
    $$\left\lfloor\frac{k^2-1}{k}\right\rfloor = k-1$$ edges incident to every vertex $v$ of degree $k^2-1$ can be coloured with the same colour, then for every such vertex $v$ and for each of the $k+1$ colours, exactly $k-1$ edges incident to such $v$ are coloured with this colour. Analogously, for the only vertex $u$ of $G$ with degree $k^2+1 = (k+1)(k-1)+2$,
    at most 
     $$\left\lfloor\frac{k^2+1}{k}\right\rfloor = k$$ 
     edges incident to the vertex $u$ can be coloured with the same colour, and hence exactly $k$ edges incident to the vertex $u$ must be coloured with some colour, say $\alpha$. Consider the subgraph of $G$ induced by the edges of $G$ coloured with $\alpha$. The sum of degrees in this subgraph equals 
     $$(k^2+1)(k-1)+k,$$ 
     which is always an odd number, a contradiction. 
\end{proof}

Observation~\ref{ExampleGeneral} thus implies that $\delta_k^{\rm opt}\geq k^2$ for every $k\geq 2$. We conjecture that in fact $\delta_k^{\rm opt} = k^2$ for each $k\geq 2$, which holds for $k=2$ by Theorem~\ref{majority23-res2}.

	\begin{conjecture}\label{EdgeMajconj}
		For every integer $k\geq 2$, if a graph $G$ has minimum degree $\delta\geq k^2$, then $G$ is $\frac{1}{k}$-majority $(k+1)$-edge-colourable.
	\end{conjecture}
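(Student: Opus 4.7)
The natural approach is to mimic the iterative extraction from the proof of Theorem~\ref{bipartiteTheorem}: set $\overline{G}_{k+1} = G$, and for $i = k+1, k, \ldots, 2$ apply Lemma~\ref{mod_alon} to $\overline{G}_i$ with the constant weight $z_i \equiv 1/i$, taking $G_i$ to consist of the edges with $x_i = 1$ and $\overline{G}_{i-1} = \overline{G}_i \setminus G_i$; finally put $G_1 = \overline{G}_1$. The target is $d_{G_i}(v) \leq \lfloor d_G(v)/k \rfloor$ for every $v$ and every $i$.

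The slack available at a critical vertex of degree exactly $k^2 = (k+1)(k-1)+1$ is very tight: the $k+1$ colour classes carry on average $k^2/(k+1) < k$ edges at $v$, while the allowed maximum is $k$, so at most one colour may reach $k$ edges at $v$ and none may exceed this threshold. In the bipartite proof any such violation is prevented by Claim~\ref{Claim1-bipartite}, which forces $d_{G_t}(v) = d_{\overline{G}_t}(v)/t$ whenever this ratio is an integer. In the general case Lemma~\ref{mod_alon}~(iii) permits an ``overshoot'' $d_{G_t}(v) = d_{\overline{G}_t}(v)/t + 1$ when $v$ lies on an odd cycle in $\overline{G}_t$ with integer $z_t$-sum at every vertex, and a single such overshoot at a step where $d_{\overline{G}_t}(v)/t = k$ already yields $d_{G_t}(v) = k+1$, violating the target.

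The plan is therefore to direct every overshoot to a vertex whose current slack can absorb it. Three properties of Lemma~\ref{mod_alon}~(iii) would be exploited: the overshooting vertices at step $t$ lie on pairwise independent odd cycles in $\overline{G}_t$; inside each such bad cycle one is free to designate the ``apex'' on which the $+1$ is placed; and the integer-sum condition $d_{\overline{G}_t}(u)\equiv 0\pmod t$ at every vertex $u$ of the cycle is itself restrictive. The rest of the bipartite argument should then lift almost verbatim, by propagating along $t = k+1, \ldots, 2$ a reinforced invariant of the form $d_{\overline{G}_t}(v) \in \{t\ell, t\ell+1,\ldots,t(\ell+1)\}$ augmented by a cumulative overshoot budget at $v$ of size at most one.

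The main obstacle is to show that a legitimate apex always exists. A deadlock configuration would be a bad cycle at some step $t$ all of whose vertices are critical and have already exhausted their overshoot budget at some earlier step $t' > t$. Breaking this deadlock seems to demand either a parity/divisibility argument ruling out the simultaneous integer-sum conditions across all vertices of such a cycle, a refinement of Lemma~\ref{mod_alon} allowing redistribution of the $+1$ along an auxiliary path, or a small local swap of edges between $G_t$ and $\overline{G}_t$ that exchanges the overshoot for an undershoot at a different vertex. For small $k$ the iteration length is short enough that these conflicts can be resolved by direct case analysis, which presumably is the route underlying the paper's proof of Conjecture~\ref{EdgeMajconj} for $k\leq 4$; carrying any of these redirection mechanisms through uniformly in $k$ is in my view the genuinely hard part.
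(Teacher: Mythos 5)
There is a genuine gap, and it is worth being clear that the statement you are proving is a \emph{conjecture} which the paper itself leaves open: the authors establish it only for $k\leq 4$, and in general obtain just the weaker bounds $\delta\geq 2k^2$ (Theorem~\ref{thm1-GenMajEdg}) and $\delta\geq\frac{7}{4}k^2+\frac{1}{2}k$ (Corollary~\ref{7_4_Corollary}). Your proposal correctly identifies the obstruction --- the failure of Claim~\ref{Claim1-bipartite} when odd cycles are present, so that Lemma~\ref{mod_alon}~(iii) can produce an overshoot $d_{G_t}(v)=d_{\overline{G}_t}(v)/t+1$ at exactly the step where no slack remains --- and this matches the paper's own diagnosis at the start of Section~\ref{SectionGeneralGraphs}. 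But everything after that is a plan, not a proof. The crucial step, showing that every bad cycle contains a vertex able to absorb the $+1$ (or that the ``deadlock'' configuration cannot occur), is exactly the missing ingredient, and you say so yourself. None of the three escape routes you list (a parity argument across the integer-sum conditions, a redistribution along an auxiliary path, or a local swap) is carried out, and there is no reason offered why the reinforced invariant with a one-unit overshoot budget would propagate: an overshoot at step $t$ perturbs $d_{\overline{G}_{t-1}}(v)$, which can re-trigger the integer-sum condition of (iii) at a later step at the same vertex, and the independence of bad cycles guaranteed by the lemma holds only within a single application, not across the $k$ iterations. The paper's concluding remarks explicitly state that their refinement of the Alon--Wei lemma ``was not strong enough'' for the general case, which is precisely the hole in your argument.

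Your guess about how the paper handles $k\leq 4$ is also not quite right: it is not a direct case analysis of conflicting bad cycles in your iteration. Instead the paper first normalises degrees via Observations~\ref{limit_deg_1} and~\ref{limit_deg_2} (splitting high-degree vertices and adding a matching between copies of $G$) so that all degrees lie in $S_k=\{i:k^2\leq i<2k^2,\ i\equiv k-1\ (\mathrm{mod}\ k)\}$; it then peels off at most one or two colour classes with Lemma~\ref{mod_alon} and distributes the rest by iterated Euler-type $2$-edge-colourings (Observation~\ref{euler}), using an edge-recolouring exchange argument to destroy the regular monochromatic components with an odd number of edges that would otherwise force a bad vertex in a tight position. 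If you want to pursue the conjecture, either strengthening Lemma~\ref{mod_alon}~(iii) to rule out or relocate overshoots, or extending the Euler-split case analysis beyond $k=4$, is where the real work lies.
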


By means of Lemma~\ref{mod_alon} we shall now show that $\delta_k^{\rm opt}$ is at most twice the conjectured value. The proof of this fact shall also be an indispensable ingredient of the further slight improvement included in Theorem~\ref{thm2-GenMajEdg}.

	\begin{theorem} \label{thm1-GenMajEdg}
		For every integer $k\geq 2$, if a graph $G$ has minimum degree $\delta\geq 2k^2$, then $G$ is $\frac{1}{k}$-majority $(k+1)$-edge-colourable.
	\end{theorem}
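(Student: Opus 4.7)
The plan is to mirror the strategy of Theorem~\ref{bipartiteTheorem}: set $\overline{G}_{k+1} = G$, and for $i = k+1, k, \ldots, 2$ iteratively apply Lemma~\ref{mod_alon} to $\overline{G}_i$ with the constant weight $z_i(e) = 1/i$ to carve off a subgraph $G_i$, letting $\overline{G}_{i-1} = \overline{G}_i \setminus G_i$; finally put $G_1 = \overline{G}_1$. This produces an edge partition into $k+1$ colour classes, and the task reduces to showing that $d_{G_i}(v) \leq L := \lfloor d_G(v)/k \rfloor$ at every vertex $v$ and every index $i \in \{1,\ldots,k+1\}$.

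The main obstacle, as already flagged in the paper, is that Claim~\ref{Claim1-bipartite} breaks down in general: Lemma~\ref{mod_alon}(iii) permits the glitch $d_{G_i}(v) = d_{\overline{G}_i}(v)/i + 1$ whenever $i \mid d_{\overline{G}_i}(v)$ and $v$ sits on one of the resulting bad odd cycles. The idea is to show that the slack built into the hypothesis $\delta(G) \geq 2k^2$ buys exactly the one-edge budget needed to absorb this glitch at each of the $k$ peeling steps; this is precisely where the factor $2$ is used, and it is hard to see how to push the same invariant beyond $k^2$ without attacking the odd cycles themselves.

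Concretely, I would establish by downward induction on $i$ the strengthened invariant
\[ d_{\overline{G}_i}(v) \leq i(L-1) \quad \text{for all } 2 \leq i \leq k+1, \]
together with $d_{\overline{G}_1}(v) \leq L-1$. The base case $i = k+1$ reads $d_G(v) \leq (k+1)(L-1)$, which follows from $\delta \geq 2k^2$: writing $d_G(v) = kL + r$ with $0 \leq r \leq k-1$ one gets $L \geq 2k$, and hence $(k+1)(L-1) - d_G(v) = L - k - 1 - r \geq 2k - k - 1 - (k-1) = 0$.

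For the inductive step, the lower estimate $d_{G_i}(v) \geq \lfloor d_{\overline{G}_i}(v)/i \rfloor$ coming from Lemma~\ref{mod_alon}(i) will be central. Writing $d_{\overline{G}_i}(v) = im + r$ with $0 \leq r \leq i-1$, the inductive hypothesis forces either $r = 0$ with $m \leq L-1$, or $r \geq 1$ with $m \leq L-2$, and in both subcases
\[ d_{\overline{G}_{i-1}}(v) \leq d_{\overline{G}_i}(v) - m = (i-1)m + r \leq (i-1)(L-1), \]
closing the induction (the analogous check handles the last step $i = 2 \to 1$). The desired bound $d_{G_i}(v) \leq L$ then follows at once: if $i \nmid d_{\overline{G}_i}(v)$, Lemma~\ref{mod_alon}(i) already gives $d_{G_i}(v) \leq \lceil d_{\overline{G}_i}(v)/i \rceil \leq L-1$; if $i \mid d_{\overline{G}_i}(v)$, then even the worst-case odd-cycle glitch of (iii) only inflates $d_{G_i}(v)$ to $(L-1) + 1 = L$, exactly meeting the target.
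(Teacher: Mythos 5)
Your proof is correct. It runs on the same engine as the paper's argument -- iterated peeling of colour classes via Lemma~\ref{mod_alon}(i) with constant weight functions, with the minimum-degree hypothesis $\delta\geq 2k^2$ consumed by the final leftover class -- but the parametrisation and bookkeeping are genuinely different. The paper peels $G_1,\dots,G_k$ with adaptively tuned real weights $\alpha_i=\bigl(\tfrac{\delta}{k}-1\bigr)/\bigl(\delta-(i-1)(\tfrac{\delta}{k}-2)\bigr)$ and tracks the real-valued invariant $d_{\overline{G}_i}(v)\leq\beta\bigl(\delta-i(\tfrac{\delta}{k}-2)\bigr)$, whereas you port the weight schedule $z_i\equiv 1/i$ from the bipartite proof of Theorem~\ref{bipartiteTheorem} and track the integer invariant $d_{\overline{G}_i}(v)\leq i(L-1)$ with $L=\lfloor d_G(v)/k\rfloor$. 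Your accounting checks out: the base case needs $L\geq k+1+r$ with $r=d_G(v)\bmod k\leq k-1$, which is exactly $L\geq 2k$, i.e.\ $\delta\geq 2k^2$; the inductive step correctly splits on whether $i$ divides $d_{\overline{G}_i}(v)$, using the lower bound $d_{G_i}(v)\geq\lfloor d_{\overline{G}_i}(v)/i\rfloor$ from Lemma~\ref{mod_alon}(i) to control the remainder and the upper bound from the same item to get $d_{G_i}(v)\leq L$. A pleasant by-product of your version is that it delivers the slightly stronger per-vertex bound $d_{G_i}(v)\leq\lfloor d_G(v)/k\rfloor$ (rather than $\leq d_G(v)/k$), it needs only item (i) of the lemma, and it lands at exactly the same threshold $2k^2$; on the other hand, the paper's tuned-$\alpha_i$ bookkeeping is the form that gets reused verbatim inside the proof of Theorem~\ref{thm2-GenMajEdg}, which your variant would not directly support.
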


 	\begin{proof}
		Let $G=(V,E)$ be a graph with minimum degree $\delta\geq 2k^2$, for some fixed integer $k\geq 2$. Analogously as within the proof of Theorem \ref{bipartiteTheorem}, we shall use Lemma~\ref{mod_alon} in order to choose $k$ consecutive subgraphs $G_1,\dotsc,G_k$ of $G$ and colour the edges of each $G_i$ with colour $i$. The remaining edges shall be coloured with colour $k+1$. 
  
  Set $\overline{G}_0 = G$. For $i = 1,\dotsc,k$ let $G_i$ be a subgraph of $\overline{G}_{i-1}$ induced by the edges $e$ with $x_i(e)=1$ where $x_i$ is a function resulting from applying Lemma~\ref{mod_alon} to $\overline{G}_{i-1}$ with a constant weight function $z_i:E(\overline{G}_{i-1})\rightarrow \{\alpha_i\}$, where the value of $\alpha_i \in [0,1]$ shall be specified later; we also set $\overline{G}_i = (V(G), E(\overline{G}_{i-1}) \setminus E(G_i))$. Let $G_{k+1} = \overline{G}_k$. 
  We shall show we may choose $\alpha_i$ for $i \in \{ 1, \dotsc, k\}$ so that for every vertex $v$ of $G$ and each $j \in \{1,\dotsc,k+1\}$,
		\begin{equation}\label{dGjineqdG}
  d_{G_j}(v) \leq \frac{d_G(v)}{k}.
  \end{equation}
		
  By Lemma~\ref{mod_alon} (i), for every $v\in V$: 
		\begin{equation}\label{dG1inequalities}
  \alpha_1 d_G(v) -1 < \sum\limits_{e\ni v} x_1(e) = d_{G_1}(v) \leq \alpha_1 d_G(v) +1 \text{.}
  \end{equation}
  In order to satisfy~(\ref{dGjineqdG}) it is thus necessary and sufficient to
  choose $\alpha_1$ so that $\alpha_1 d_G(v) +1 \leq \frac{d_G(v)}{k}$ for all vertices $v$ of $G$, that is $\alpha_1 \leq \frac{1}{k} - \frac{1}{d_G(v)}$. Since the function $f(n) = \frac{1}{k}-\frac{1}{n}$ is increasing for $n>0$, we shall achieve our goal by setting
		\begin{equation}\label{Alpha1setting}
  \alpha_1 = \frac{\frac{\delta}{k}-1}{\delta} \text{.} 
  \end{equation}
		Consequently, a vertex $v$ of degree $\delta$, which in some sense are the most restrictive ones, 
  may in theory end up with $d_{G_1}(v)$ arbitrarily close to $\frac{\delta}{k}-2$, cf.~(\ref{dG1inequalities}) and~(\ref{Alpha1setting}). Hence for $i \in \{ 1, \dotsc, k\}$ we in general set:
	\begin{equation}\label{Alphaisetting}		
   \alpha_i = \frac{\frac{\delta}{k} -1}{\delta-(i-1)(\frac{\delta}{k}-2)} \text{.} 
  \end{equation}

  We shall now formally prove that such choices of $\alpha_i$ guarantee (\ref{dGjineqdG}) to hold for all $j$ and $v$.
		Let $v$ be an arbitrarily chosen vertex of $G$. There exists $\beta \geq 1$ such that $d_G(v) = \beta \delta$. 
  We shall precisely show that for every $i \in \{1,\dotsc,k\}$,
		\begin{equation}\label{bound1}
			d_{\overline{G}_i}(v) \leq \beta \left( \delta - i \left( \frac{\delta}{k} - 2 \right) \right)
		\end{equation}
		and
		\begin{equation}\label{bound2}
			d_{G_i}(v) \leq \frac{\beta \delta}{k} = \frac{d_G(v)}{k} \text{.}
		\end{equation}
  
		We proceed by induction with respect to $i$. Since
		\[ \alpha_1 d_G(v) = 
  \frac{\frac{\delta}{k}-1}{\delta} \cdot \beta \delta = \beta \left( \frac{\delta}{k}-1 \right) \]
		and  $\beta \geq 1$, 
  by~(\ref{dG1inequalities}) we obtain
		\[ \beta \left( \frac{\delta}{k}-2 \right) < d_{G_1}(v) \leq \beta \frac{\delta}{k} \text{,}  \]
		so \eqref{bound1} and \eqref{bound2} hold for $i=1$, which yields the base case of induction.
		
		For an induction step, 
  assume that 
  $d_{\overline{G}_{i-1}} (v) = D \leq \beta \left( \delta - (i-1) \left( \frac{\delta}{k} - 2 \right) \right)$ for some $i\leq k$. Note that by Lemma~\ref{mod_alon} (i), 
		\[ d_{G_i}(v) > \alpha_i D -1 \text{,} \]
		and thus, by~(\ref{Alphaisetting}):
		\begin{align*}
			d_{\overline{G}_i}(v) &< D - (\alpha_i D -1) = (1 - \alpha_i) D + 1 \\
			&\leq (1 - \alpha_i) \beta \left( \delta - (i-1) \left( \frac{\delta}{k} - 2 \right) \right) + 1 \\
			&= \beta \left( \delta - (i-1) \left( \frac{\delta}{k} - 2 \right) - \left(\frac{\delta}{k} -1 \right) \right) +1\\
			&\leq  \beta \left(\delta - (i-1) \left( \frac{\delta}{k} - 2 \right) - \left( \frac{\delta}{k} - 1 \right) \right) + \beta \\ 
			&= \beta \left(\delta - i \left( \frac{\delta}{k} - 2 \right) \right) \text{,}
		\end{align*}
cf.~\eqref{bound1}.		
		
		On the other hand, by~(\ref{dG1inequalities}) and~(\ref{Alphaisetting}):
		\begin{align*}
			d_{G_i}(v) &\leq \alpha_i D + 1 \leq \alpha_i \beta \left( \delta - (i-1) \left( \frac{\delta}{k} - 2 \right) \right) +1 \\
			&= \beta \left( \frac{\delta}{k} - 1 \right) +1 \leq \beta \left( \frac{\delta}{k} - 1 \right) + \beta =  \frac{\beta \delta}{k} = \frac{d_G(v)}{k} \text{,}
		\end{align*}
		hence \eqref{bound2} and \eqref{bound1} hold
  (where \eqref{bound2} implies~(\ref{dGjineqdG})). 
		
		Finally, observe that by \eqref{bound1} we have:
		\[ d_{G_{k+1}}(v) = d_{\overline{G}_{k}}(v) \leq \beta \left( \delta - k \left( \frac{\delta}{k} - 2 \right) \right) = 2 \beta k \text{.} \]
		Since $\delta \geq 2k^2$, we obtain $d_{G_{k+1}}(v) \leq \frac{\beta\delta}{k} = \frac{d_G(v)}{k}$, which concludes the proof of Theorem~\ref{thm1-GenMajEdg}.
	\end{proof}
	
	Note that the bound on the minimum degree of $G$ was only required to bound $d_{G_{k+1}}$ in the proof of Theorem~\ref{thm1-GenMajEdg}. This remark allows us to use almost entire reasoning above within the proof of Theorem~\ref{thm2-GenMajEdg} below, which improves the general lower bound for $\delta_k^{\rm opt}$. 
 This refinement exploits the following straightforward and direct consequence of Euler's Theorem (through adding a single auxiliary vertex to a graph, if necessary). Details of its proof can be found e.g. in~\cite{majority23,Przybylo22standard} and most likely in many other papers.

	\begin{observation} 
 \label{euler}
 Let $G$ be a connected graph.
		\begin{enumerate}
			\item[$(1^\circ)$] If $G$ has an even number of edges or $G$ contains vertices of odd degree, then 
   $G$ has a $2$-edge-colouring such that 
   for every vertex $u$ of $G$, at most $\left\lceil \frac{d_G(u)}{2} \right\rceil$ of the edges incident with $u$ have the same colour.
			\item[$(2^\circ)$] If $G$ has an odd number of edges, all vertices of $G$ have even degree and $u_G$ is any vertex of $G$, then $G$ has a $2$-edge-colouring such that 
   for every vertex $u$ of $G$ distinct from $u_G$, exactly $\frac{d_G(u)}{2}$ of the edges incident with $u$ have the same colour, and at most $\frac{d_G(u_G)}{2}+1$ of the edges incident with $u_G$ have the same colour.
		\end{enumerate}
	\end{observation}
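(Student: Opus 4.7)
The plan is to derive both parts from the classical Euler circuit theorem together with the following $2$-edge-colouring recipe: whenever a connected multigraph $H$ has all even degrees, take any closed Eulerian trail $e_1,e_2,\ldots,e_m$ of $H$ and colour $e_i$ by $1+(i \bmod 2)$. The trail visits each vertex $v$ exactly $d_H(v)/2$ times, and each visit uses two consecutive trail-edges at $v$; these consecutive edges receive different colours with the sole possible exception of the closing pair $(e_m,e_1)$ at the starting vertex, which share a colour precisely when $m$ is odd. Hence at every vertex other than the starting one of the trail each colour receives exactly $d_H(v)/2$ edges, whereas at the starting vertex the two counts are either both $d_H(v)/2$ (when $m$ is even) or $d_H(v)/2 \pm 1$ (when $m$ is odd).

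Part $(2^\circ)$ is then immediate: $G$ itself is Eulerian with $|E(G)|$ odd, so running the recipe on an Eulerian trail started at the distinguished vertex $u_G$ produces exactly $d_G(u)/2$ of each colour at every $u \neq u_G$ and at most $d_G(u_G)/2 + 1$ of either colour at $u_G$.

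For part $(1^\circ)$ I split on whether $G$ itself has all even degrees. If it does, the assumption of $(1^\circ)$ forces $|E(G)|$ to be even, so the recipe applied directly to an Eulerian trail of $G$ yields exactly $d_G(u)/2 = \lceil d_G(u)/2\rceil$ of each colour at every vertex. Otherwise $G$ has odd-degree vertices, and I form $G'$ by adding a single auxiliary vertex $v^*$ joined by one edge to each odd-degree vertex of $G$; by the handshake lemma $d_{G'}(v^*)$ is even, all other degrees in $G'$ become even, and $G'$ remains connected. I apply the recipe to an Eulerian trail of $G'$ starting at $v^*$ and then discard the colours of the edges incident to $v^*$. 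Every original vertex $u$ ends with $d_{G'}(u)/2 = \lceil d_G(u)/2\rceil$ edges of each colour in $G'$, so deleting the at most one auxiliary edge at $u$ can only decrease one of these counts, and the bound $\lceil d_G(u)/2\rceil$ survives.

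The only care needed is to ensure that the potential parity imbalance of the closing pair $(e_m,e_1)$ is located at a vertex we can afford to neglect — which is why the Eulerian trail is started at $u_G$ in $(2^\circ)$ and at $v^*$ in the auxiliary-vertex subcase of $(1^\circ)$. Aside from this bookkeeping there is no genuine obstacle, which matches the paper's description of the statement as a straightforward consequence of Euler's theorem.
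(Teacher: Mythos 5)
Your proof is correct and follows exactly the route the paper indicates: the paper does not spell out a proof but describes the observation as "a straightforward and direct consequence of Euler's Theorem (through adding a single auxiliary vertex to a graph, if necessary)", which is precisely your alternating-colour Eulerian trail combined with the auxiliary vertex $v^*$ for the odd-degree case. Your bookkeeping of the single possible imbalance at the closing pair $(e_m,e_1)$, placed at $u_G$ in $(2^\circ)$ and at the discardable $v^*$ in $(1^\circ)$, is exactly the standard argument from the cited sources.
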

	
	In what follows, 
 a \textit{bad vertex} shall mean a vertex of $G$ which was chosen as the vertex $u_G$ while applying Observation~\ref{euler} above, that is the vertex with exactly $\frac{d_G(v)}{2}+1$ incident edges coloured the same in one of the two colours. 
 
	\begin{theorem} \label{thm2-GenMajEdg}
 Let $k = 2^n + m - 1\geq 2$ where $n$ is a positive integer and $m$ is a nonnegative integer less than $2^n$. If $G$ is a graph with minimum degree $\delta\geq \frac{3}{2}k^2+\frac{1}{2}km+\frac{1}{2}k$, 
 then $G$ has a $\frac{1}{k}$-majority $(k+1)$-edge-colouring.
	\end{theorem}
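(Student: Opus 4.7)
The plan is to adapt the proof of Theorem~\ref{thm1-GenMajEdg}. Since $k+1=2^n+m$, I would spend $m$ colours on Lemma~\ref{mod_alon} carvings (exactly as before) and the remaining $2^n$ colours on a balanced recursive partition of the residual graph via Observation~\ref{euler}, rather than dumping all leftover edges into a single final class (which was what forced $\delta\ge 2k^2$).

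Starting from $\overline{G}_0=G$, I would first iterate Lemma~\ref{mod_alon} exactly $m$ times with the constants
\[
  \alpha_i=\frac{\delta/k-1}{\delta-(i-1)(\delta/k-2)}
\]
used in Theorem~\ref{thm1-GenMajEdg}. Its inductive argument carries over verbatim to give colour classes $G_1,\dotsc,G_m$ with $d_{G_i}(v)\le d_G(v)/k$ at every vertex, and a residual graph $\overline{G}_m$ whose degree at any vertex $v$ with $d_G(v)=\beta\delta$ satisfies
\[
  d_{\overline{G}_m}(v)\;\le\;\beta\!\left(\delta-m\!\left(\tfrac{\delta}{k}-2\right)\right)\;=\;\tfrac{\beta\delta(2^n-1)}{k}+2m\beta,
\]
where I have used $k-m=2^n-1$. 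I would then apply Observation~\ref{euler} recursively $n$ times to $\overline{G}_m$, processing each connected component separately, to produce $2^n$ further subgraphs that serve as the remaining colour classes $G_{m+1},\dotsc,G_{k+1}$.

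The technical heart is to show that each of these $2^n$ classes has degree at $v$ of at most $d_{\overline{G}_m}(v)/2^n+c$ for a small additive constant $c$, and then to verify $d_{\overline{G}_m}(v)/2^n+c\le\beta\delta/k$. Substituting the residual bound and using $2^n=k+1-m$, the target inequality reduces algebraically to $\beta\delta\ge 2mk\beta+ck\cdot 2^n$; taking the worst case $\beta=1$ and the refined value $c=\tfrac{3}{2}-\tfrac{1}{2^n}$ yields precisely the hypothesised $\delta\ge\tfrac{3}{2}k^2+\tfrac{1}{2}km+\tfrac{1}{2}k$, since $ck\cdot 2^n=\tfrac{3}{2}k(k+1-m)-k$.

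The main obstacle is establishing the bound $c\le\tfrac{3}{2}-\tfrac{1}{2^n}$. A naive worst-case charge of $+1$ at every binary split (the penalty when a vertex is ``bad'' in Observation~\ref{euler}~$(2^\circ)$) telescopes to $c\le 2$, which merely reproduces Theorem~\ref{thm1-GenMajEdg}. Two savings are needed. First, case~$(1^\circ)$ of Observation~\ref{euler} contributes only $+\tfrac{1}{2}$ per split (from a ceiling at an odd-degree vertex), whereas in case~$(2^\circ)$ the $+1$ penalty lands solely on the designated bad vertex $u_G$. Second, one chooses bad vertices across the $2^n-1$ applications so that no vertex is designated bad more than once along its recursion branch---possible because the hypothesis on $\delta$ keeps every split-component much larger than $2^n$. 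Along any length-$n$ recursion branch the individual contributions $\epsilon_i$ are then all $\le\tfrac{1}{2}$ except possibly one, of size $\le 1$, and telescoping $\sum_{i=1}^n\epsilon_i/2^{n-i}$ with the ``bad'' slot placed at the latest (most costly) level yields $c\le\tfrac{3}{2}-\tfrac{1}{2^n}$.
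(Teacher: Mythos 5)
Your proposal follows the paper's proof essentially step for step: the same $m$ initial applications of Lemma~\ref{mod_alon} with the same $\alpha_i$, the same residual bound on $d_{\overline{G}_m}(v)$, the same $n$-fold recursive bisection of the residual graph via Observation~\ref{euler} into $2^n$ classes indexed by binary strings, the same per-branch penalties $+\frac{1}{2}$ (ordinary split) and $+1$ (bad vertex), the same telescoped constant $c=\frac{3}{2}-\frac{1}{2^n}$ obtained by placing the single $+1$ at the deepest level, and the same closing arithmetic. The final reduction to $\delta\ge\frac{3}{2}k^2+\frac{1}{2}km+\frac{1}{2}k$ is correct.

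The one genuine gap is your justification of the claim that bad vertices can be chosen so that no vertex is designated bad more than once along its recursion branch. You assert this is ``possible because the hypothesis on $\delta$ keeps every split-component much larger than $2^n$,'' but the minimum degree of $G$ gives no such guarantee: components of the monochromatic subgraphs deep in the recursion can be very small (e.g.\ an odd cycle can appear as an Eulerian component with an odd number of edges, which is exactly the case forcing a bad vertex), and for small $n$ the degrees in the residual graph do not survive $n$ halvings with room to spare. The paper resolves this differently: it tracks ``special'' (previously bad) vertices, always picks a non-special vertex as the new bad one when possible, and when this is impossible --- i.e.\ when \emph{every} vertex of the current component has already been special for some prefix --- it does not split that component at all but assigns all its edges the all-zero continuation (Rule~(a)). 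It then argues that such a component is connected inside each earlier $H_q$, each of which contributed at most one special vertex, so the component has fewer than $n$ vertices and the degree of any of its vertices in the resulting colour class is below $n\le k\le d_G(v)/k$ for trivial reasons. Some device of this kind is needed to close your argument; without it the $+1$ penalty could in principle recur along a branch and the constant degrades back towards the $2$ of Theorem~\ref{thm1-GenMajEdg}.
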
 
	\begin{proof}
		We start by partially colouring the graph $G=(V,E)$ with $m$
  colours, 
  choosing $G_1,\ldots,G_m$, corresponding to colours $1,\ldots,m$, the same way as in the proof of Theorem \ref{thm1-GenMajEdg}. 
  By~\eqref{bound2} these colours satisfy the \textit{majority rule}, that is for every vertex $u$, at most $d_G(u)/k$ 
  edges incident with $u$ are coloured with any of the colours in $\{1,\ldots,m\}$. 
  
  Let $H$ be a subgraph of $G$ induced by the uncoloured edges. Notice that $H=\overline{G}_m$ (using the notation from the proof of Theorem \ref{thm1-GenMajEdg}), and thus by~\eqref{bound1}, if $v$ is a vertex of $G$ such that $d_G(v)=\beta \delta$, then
		\begin{equation}\label{dHvInequ}
		    d_H(v) \leq \beta \left( \delta - m \left( \frac{\delta}{k} - 2 \right) \right) \text{.} 
		\end{equation} 
We shall colour the edges of $H$ with new $2^n$ colours, namely the elements of the set $\{0,1\}^n$, hence we shall be colouring these edges with binary vectors of length $n$. For any vector $w\in\{0,1\}^n$ and $0\leq j\leq n$ we denote by $[w]_j$ the \emph{prefix} of length $j$ of $w$, that is the vector in $\{0,1\}^j$ formed of $j$ first consecutive coordinates of $w$ (where $[w]_j=\emptyset$ for $j=0$).
Initially (in step $0$) we associate the vector $c_e=(0,\ldots,0)$ to every edge $e$ of $H$. The vectors $c_e$, $e\in E(H)$, shall be modified one coordinate after another in $n$ steps. In certain situations we shall however be finally fixing all the remaining coordinates of some of these vectors at once -- the corresponding edges shall be called \emph{determined}. In what follows, $c_e$ shall always refer to the current value of the colour (vector) associated with an edge $e$.
Suppose for a given $i\in\{1,\ldots,n\}$ we have completed step $i-1$ of our construction, hence each $c_e$ has the first $i-1$ coordinates finally fixed (or all, for selected, determined, $e\in E(H)$), and we are about to perform step $i$. We proceed as follows.

For each possible prefix $p\in\{0,1\}^{i-1}$ we denote by $H_p$ the subgraph induced in $H$ by all not yet determined edges $e$ with $[c_e]_{i-1}=p$ (after step $i-1$). In each such $H_p$ we consider all components one after another. Let $H'$ be such a component for any fixed $p\in\{0,1\}^{i-1}$. Let us give an advance notice to the fact that at most one vertex of $H'$ shall be chosen to be so-called \emph{special for $H_{p'}$}, where $p'$ is the extensions of $p$ with $1$ added to its end (i.e. $p'\in\{0,1\}^i$, $[p']_{i-1}=p$ and $p'(i)=1$), according to the rule specified below.
\begin{enumerate}
    \item[(a)] If for each vertex $v$ of $H'$ there exists a prefix $q$ of $p$ (possibly $q=p$) such that $v$ is special for $H_q$, 
    then for every edge $e$ of $H'$ we fix as $0$ all the remaining (starting from the $i$'th one) coordinates of $c_e$,  
    hence all edges of $H'$ become determined.
    \item[(b)] Otherwise, we use Observation~\ref{euler} to temporarily colour the edges of $H'$ blue and red. For each edge $e$ of $H'$ we fix the $i$'th coordinate of $c_e$ as $0$ if $e$ is blue, and $1$ otherwise. Moreover, if we are forced to create a bad vertex $u_{H'}$ (with $(d_{H'}(u_{H'})/2)+1$ incident edges of the same colour), then we choose it so that it was not special for $H_q$ for any prefix $q$ of $p$ and assign colours blue and red so that $u_{H'}$ is incident with exactly $(d_{H'}(u_{H'})/2)+1$ red edges; we then also choose $u_{H'}$ to be special for $H_{p'}$ where $p'$ is the extensions of $p$ with $1$ added to its end.
\end{enumerate}

	After going through all $n$ steps described above, we complete a $(k+1)$-edge-colouring of the graph $G$. It remains to show that all new colours satisfy the $\frac{1}{k}$-majority rule.
 Consider a vertex $v\in V$ and any fixed colour $\alpha\in\{0,1\}^n$. Let us denote by $G_\alpha$ the subgraph induced in $G$ (in fact in $H$) by all the edges coloured with $\alpha$.

 Suppose first that $v$ is incident with some edge $e\in E(H)$ which was coloured (determined) with a colour $\alpha$ according to Rule (a) above, i.e. at certain iteration $i$, the edge $e$ belonged to a component $H'$ of some $H_p$ with all vertices being special for some $H_q$ where $q$ is a prefix of $H_p$. Note however that by our construction, $H'$ must have been a (connected) subgraph of every such $H_q$, and thus for each such $H_q$ at most one vertex of $H'$ might have been chosen to be special for $H_q$. Consequently, as $p$ has no more than $n$ distinct prefixes,  $H'$ must have contained at most $n$ vertices. Hence for each its vertex, in particular $v$,
 $$d_{G_\alpha}(v) < n \leq k \leq\frac{d_G(v)}{k}.$$

Assume in turn that every edge $e\in E(G_\alpha)$ incident with $v$ was coloured by means of Rule (b) exclusively. 
This rule was thus utilised $n$ times in order to settle all edges incident with $v$ coloured $\alpha$, each time via application of Observation~\ref{euler} to a component $H'$ (containing $v$) of some $H_p$, where $p$ is a prefix of $\alpha$. 
Suppose $v$ had degree $d$ in such $H'$, say in iteration $i$. 
If $v$ was chosen to be special for $H_p$, which could happen only once during $n$ steps of our construction (for prefixes $p$ of the fixed $\alpha$), then at most $\frac{d}{2}+1$ edges incident with $v$ got their colours' prefixes fixed as $p$ after step $i$; otherwise the number of such edges is bounded above by 
$\frac{d}{2} + \frac{1}{2}$, cf. Observation~\ref{euler}. Only such edges retained the chance to belong to $G_\alpha$.
In order to estimate the maximum number of edges incident with $v$ which eventually could be coloured $\alpha$ let us thus consider two following functions.
 Let $f(d) = \frac{d}{2} + \frac{1}{2}$ and $g(d) = \frac{d}{2} + 1$. By the observations above, $d_{G_\alpha}(v)$ 
 is bounded above
 by the maximum of $f^n(d)$ and $f^i(g(f^j(d)))$ for all natural numbers $i$ and $j$ such that $i+j = n-1$ where $d=d_H(v)$. Since $f(d)<g(d)$ for all $d$, the value of $f^i(g(f^j(d)))$ is greater that $f^n(d)$ for all $i$ and $j$ satisfying $i+j=n-1$. We shall prove the following upper bound.
	
	\begin{claim} \label{obs6}
		 The inequality $f^i(g(f^j(d))) \leq \frac{d-1}{2^n} + \frac{3}{2}$ holds for all $d$ and all natural numbers $i$ and $j$ such that $i+j=n-1$.
   \end{claim}
		 \begin{proof}[Proof of Claim~\ref{obs6}] 
		 	We begin by proving that
		 	\[ f^i(d) = \frac{d-1}{2^i} + 1 \]
		 	holds for any nonnegative integer $i$.
		 	We proceed  by induction with respect to $i$. Clearly $f^0(d) = d = \frac{d-1}{2^0} + 1$. Assume that $f^j(d) = \frac{d-1}{2^j} + 1$ holds for all $j<i$. Thus,
		 	\[ f^i(d) = f(f^{i-1}(d)) = f\left( \frac{d-1}{2^{i-1}} + 1 \right) =  \frac{\frac{d-1}{2^{i-1}} + 1}{2} + \frac{1}{2} = \frac{d-1}{2^{i}} + 1 \text{.} \]
		 	
		 	Finally, for any fixed $i$ and $j$ such that $i+j=n-1$, we have
		 	\begin{align*}
		 		f^i(g(f^j(d))) &= f^i \left( g \left( \frac{d-1}{2^j} + 1 \right)\right) = f^i \left( \frac{\frac{d-1}{2^j} + 1}{2} + 1 \right) \\
		 		&= f^i \left( \frac{d-1}{2^{j+1}} + \frac{3}{2} \right) = \frac{\frac{d-1}{2^{j+1}} + \frac{3}{2}-1}{2^i} + 1 = \frac{d-1}{2^n} + \frac{1}{2^{i+1}} + 1 \text{.}
		 	\end{align*}
		 	The value of $f^i(g(f^j(d)))$ is greatest when $i=0$, and thus $f^i(g(f^j(d))) \leq \frac{d-1}{2^n} + \frac{3}{2}$.
		 \end{proof}

 By Claim~\ref{obs6} and discussion above we obtain that 
 \begin{equation}\label{dGAlphaIneq}
 d_{G_\alpha}(v)\leq \frac{d_H(v)-1}{2^n} + \frac{3}{2}.
 \end{equation}
	It remains to show that if $d_G(v)=\beta\delta$, then $d_{G_\alpha}(v)\leq \frac{\beta\delta}{k}$.
Recall that by~\eqref{dHvInequ}, $d_H(v) \leq \beta \left( \delta - m \left( \frac{\delta}{k} - 2 \right) \right)$. This combined with~\eqref{dGAlphaIneq} yield the following, where we make use of 
the facts that $k = 2^n + m - 1$, $\delta \geq \frac{3}{2}k^2+\frac{1}{2}km+\frac{1}{2}k$ and
$\beta \geq 1$:
\begin{align*}  
  d_{G_\alpha}(v) &\leq \frac{\beta \left( \delta - m \left( \frac{\delta}{k} - 2 \right) \right) - 1}{2^n} + \frac{3}{2} \\
		&= \frac{\beta \delta \left( 1-\frac{m}{k} \right) + 2\beta m - 1}{2^n} + \frac{3}{2} \\ 
		&= \frac{\beta \delta \left( \frac{2^n - 1}{k} \right) + 2\beta m - 1}{2^n} + \frac{3}{2} 
  \\
  &= \frac{\beta \delta}{k} + \frac{2\beta m - 1 - \frac{\beta \delta}{k}}{2^n} + \frac{3}{2} \\ 
		&\leq \frac{\beta \delta}{k} + \frac{2\beta m - 1 - \beta \left( \frac{3}{2}k + \frac{1}{2}m + \frac{1}{2} \right)}{2^n} + \frac{3}{2} \\ 
		&= \frac{\beta \delta}{k} + \frac{\beta \left( - \frac{3}{2}k + \frac{3}{2}m - \frac{1}{2} \right) - 1}{2^n} + \frac{3}{2} \\
		&= \frac{\beta \delta}{k} + \frac{\beta \left( - \frac{3}{2} \left( 2^n - 1 \right) - \frac{1}{2} \right) - 1}{2^n} + \frac{3}{2} \\
		&= \frac{\beta \delta}{k} - \frac{3}{2}\beta + \frac{\beta - 1}{2^n} + \frac{3}{2} \\
		&= \frac{\beta \delta}{k} + (1-\beta) \left( \frac{3}{2} - \frac{1}{2^n} \right) \\
  &\leq \frac{\beta \delta}{k} = \frac{d_G(v)}{k}. 
	\end{align*}
This concludes the proof of Theorem~\ref{thm2-GenMajEdg}.
\end{proof}

	Note that the formula for $k$ used in Theorem~\ref{thm2-GenMajEdg} implies that $m$ is always bounded above by $\frac{k}{2}$, and thus we immediately obtain the following corollary.
	
	\begin{corollary}\label{7_4_Corollary}
		For every integer $k\geq 2$, if a graph $G$ has minimum degree $\delta\geq \frac{7}{4}k^2+\frac{1}{2}k$, then $G$ has a $\frac{1}{k}$-majority $(k+1)$-edge-colouring.
	\end{corollary}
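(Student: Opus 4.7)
The plan is to derive this directly from Theorem~\ref{thm2-GenMajEdg} by making an appropriate choice of the parameters $n$ and $m$. For a given integer $k \geq 2$, I would take $n$ to be the unique positive integer satisfying $2^n \leq k+1 < 2^{n+1}$ (equivalently, $n = \lfloor \log_2(k+1) \rfloor$, which is at least $1$ since $k+1 \geq 3$) and set $m = k + 1 - 2^n$. Then $k = 2^n + m - 1$ and, by the defining inequality on $n$, we have $0 \leq m < 2^n$, so the parameters satisfy the hypotheses of Theorem~\ref{thm2-GenMajEdg}.

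The key arithmetic step is to verify that $m \leq k/2$. Since $m < 2^n = k + 1 - m$, we obtain $2m < k + 1$, and as both sides are integers this yields $m \leq k/2$. This immediately implies that $\frac{1}{2}km \leq \frac{1}{4}k^2$, and hence
\[ \frac{3}{2}k^2 + \frac{1}{2}km + \frac{1}{2}k \leq \frac{7}{4}k^2 + \frac{1}{2}k. \]

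Consequently, the hypothesis $\delta \geq \frac{3}{2}k^2 + \frac{1}{2}km + \frac{1}{2}k$ of Theorem~\ref{thm2-GenMajEdg} is implied by our assumption $\delta \geq \frac{7}{4}k^2 + \frac{1}{2}k$, and so Theorem~\ref{thm2-GenMajEdg} applied with the above choice of $n$ and $m$ yields the desired $\frac{1}{k}$-majority $(k+1)$-edge-colouring. No real obstacle arises here; the corollary is purely an arithmetic consequence of the bound in Theorem~\ref{thm2-GenMajEdg}, and the only point requiring care is the elementary observation that choosing $n$ minimally in the dyadic decomposition of $k+1$ forces $m \leq k/2$.
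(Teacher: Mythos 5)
Your proposal is correct and follows exactly the paper's route: the decomposition $k+1 = 2^n + m$ with $0 \le m < 2^n$ forces $2m < 2^n + m = k+1$, hence $m \le k/2$ and $\frac{1}{2}km \le \frac{1}{4}k^2$, so the hypothesis of Theorem~\ref{thm2-GenMajEdg} is implied by $\delta \ge \frac{7}{4}k^2 + \frac{1}{2}k$. The paper states this same observation in one line before the corollary; your write-up merely makes the arithmetic explicit.
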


\section{Confirmation of Conjecture~\ref{EdgeMajconj} for initial values of $k$}\label{SectionSmallCases}

	The main result of \cite{majority23} confirms Conjecture~\ref{EdgeMajconj} for $k=2$. In this section we shall 
 extend this result towards two following values of $k$. 
 To achieve this we shall use the two observations below.
	
	\begin{observation} \label{limit_deg_1} 
		Let $k\geq 2$ be an integer. If every graph with minimum degree $\delta \geq k^2$ and maximum degree $\Delta < 2k^2$ has a $\frac{1}{k}$-majority $(k+1)$-edge-colouring, then every graph with minimum degree at least $k^2$ has a $\frac{1}{k}$-majority $(k+1)$-edge-colouring.
		\begin{proof}
			Let $G$ be an arbitrary graph with minimum degree at least $k^2$.  
   If the maximum degree of $G$ is less than $2k^2$ then by assumption it has a $\frac{1}{k}$-majority $(k+1)$-edge-colouring. Otherwise, let $v$ be a vertex of $G$ such that $d_G(v) \geq 2k^2$. There exist unique integers $n$ and $d$ such that $d_G(v) = n k^2 + d$ and $k^2 \leq d < 2k^2$. Partition the neighbourhood of $v$ into $n+1$ disjoint sets $N_0,\dotsc,N_n$ such that $|N_0| = d$ and $|N_i|=k^2$ for $i\geq 1$. Let $H$ be a graph such that $V(H) = V(G)\setminus\{v\} \cup \{v_0,v_1,\dotsc,v_n\}$ and $E(H) = E(G-v)\cup \bigcup\limits_{i=0}^{n} \{uv_i : u\in N_i\}$. Note that this operation yields a natural bijection between the edges of $G$ and the edges of $H$. Let $\overline{G}$ be a graph constructed from $G$ by applying the above operation to all vertices of $G$ with degree at least $2k^2$. By construction, the maximum degree of $\overline{G}$ is less than $2k^2$, hence $\overline{G}$ has a $\frac{1}{k}$-majority $(k+1)$-edge-colouring, which yields a $(k+1)$-edge-colouring of $G$. It remains to prove that this is also a $\frac{1}{k}$-majority edge-colouring of $G$.
			
			Let $v$ be a vertex of $G$ with degree $d_G(v) = n k^2 + d$ where $k^2 \leq d < 2k^2$ (with $n$ possibly equal $0$). The number of edges adjacent to $v$ coloured with the same colour is bounded above by
			\[ n \left\lfloor \frac{k^2}{k} \right\rfloor + \left\lfloor \frac{d}{k} \right\rfloor = nk + \left\lfloor \frac{d}{k} \right\rfloor = \left\lfloor \frac{nk^2 + d}{k} \right\rfloor = \left\lfloor \frac{d_G(v)}{k} \right\rfloor \text{,} \]
			hence the colouring of $G$ is indeed a $\frac{1}{k}$-majority $(k+1)$-edge-colouring.
		\end{proof}
	\end{observation}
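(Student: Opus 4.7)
The plan is to reduce the general problem to the hypothesized bounded-maximum-degree case by a vertex splitting argument. Starting from an arbitrary $G$ with $\delta(G) \geq k^2$, I would replace every vertex $v$ of degree at least $2k^2$ by several copies that inherit disjoint portions of $N_G(v)$, arranged so that each new vertex has degree in the range $[k^2, 2k^2)$. This produces a graph $\overline{G}$ with $\delta(\overline{G}) \geq k^2$ and $\Delta(\overline{G}) < 2k^2$, together with a natural bijection between $E(G)$ and $E(\overline{G})$; applying the hypothesis to $\overline{G}$ then transfers a $\frac{1}{k}$-majority $(k+1)$-edge-colouring back to a $(k+1)$-edge-colouring of $G$.

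For the construction, I would decompose $d_G(v) = nk^2 + d$ uniquely with $k^2 \leq d < 2k^2$ for every $v$ with $d_G(v) \geq 2k^2$ (so automatically $n \geq 1$), partition $N_G(v)$ into sets $N_0, N_1, \ldots, N_n$ with $|N_0| = d$ and $|N_i| = k^2$ for $i \geq 1$, and replace $v$ with fresh vertices $v_0, v_1, \ldots, v_n$ joined to $N_0, \ldots, N_n$ respectively. Performing this swap independently at every high-degree vertex gives $\overline{G}$; each newly introduced vertex has degree in $[k^2, 2k^2)$ by construction, and untouched vertices retain their degrees, which by the case assumption are already strictly less than $2k^2$.

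The heart of the verification is showing that the pulled-back colouring is still $\frac{1}{k}$-majority on $G$. Untouched vertices inherit the condition automatically. For a split vertex $v$ with $d_G(v) = nk^2 + d$, the edges incident with $v$ correspond bijectively to the disjoint union of the edge sets at $v_0, \ldots, v_n$ in $\overline{G}$; each of the $n$ classes at $v_1, \ldots, v_n$ contributes at most $\lfloor k^2/k \rfloor = k$ edges of any fixed colour, while $v_0$ contributes at most $\lfloor d/k \rfloor$, totalling at most $nk + \lfloor d/k \rfloor = \lfloor (nk^2+d)/k \rfloor = \lfloor d_G(v)/k \rfloor$, exactly the required bound. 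The one place where the argument could slip is the choice of the residual range $[k^2, 2k^2)$ for $d$: it must lie at or above $k^2$ to preserve the minimum-degree hypothesis of the black-box statement and strictly below $2k^2$ to preserve the maximum-degree hypothesis, which is precisely why the splitting is triggered only once $d_G(v) \geq 2k^2$ and why the arithmetic of the floor function collapses cleanly.
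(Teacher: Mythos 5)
Your proposal is correct and follows essentially the same route as the paper: the same splitting of each vertex of degree at least $2k^2$ via the decomposition $d_G(v)=nk^2+d$ with $k^2\leq d<2k^2$, the same transfer of the colouring through the edge bijection, and the same floor-function computation $nk+\lfloor d/k\rfloor=\lfloor d_G(v)/k\rfloor$ to verify the majority condition at split vertices.
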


	\begin{observation} \label{limit_deg_2}
		For every integer $k\geq 2$, let $S_k$ be the set of all integers $i$ between $k^2$ and $2k^2$ such that $i \equiv k-1 \pmod k$. Let $\mathcal{G}_k$ be the set of all graphs for which the degrees of all vertices are in the set $S_k$. If every graph in $\mathcal{G}_k$ has a $\frac{1}{k}$-majority $(k+1)$-edge-colouring, then every graph with minimum degree
  at least
  $k^2$ has a $\frac{1}{k}$-majority $(k+1)$-edge-colouring.
		\begin{proof}
			By Observation~\ref{limit_deg_1} it is sufficient to 
   consider 
   graphs with maximum degree less than $2k^2$. Let $G$ be an arbitrary graph with minimum degree $\delta \geq k^2$ and maximum degree $\Delta<2k^2$. If all vertices of $G$ have degrees in the set $S_k$, then by assumption $G$ has a $\frac{1}{k}$-majority $(k+1)$-edge-colouring. Otherwise, let $H$ be a graph constructed from $G$ by taking two copies of $G$ and joining by edges the vertices of $G$ which do not have degrees in the set $S_k$ with their corresponding counterparts in the second copy of $G$. Note that $G$ is a subgraph of $H$. Moreover, for every vertex $v$ of $G$, either $d_H(v)=d_G(v) \in S_k$ or $d_G(v) \equiv i \pmod k$ and $d_H(v) \equiv i+1 \pmod k$ (and the same holds for the vertices in the second copy of $G$). We repeat this operation until all vertices of the obtained graph $\overline{G}$
   have degrees in the set $S_k$.
   Note the degree of each vertex $v$ of $G$ increased by at most $k-1$, and more importantly,
   \begin{equation}\label{dHvdGvFloorsTheSame}
   \left\lfloor \frac{d_H(v)}{k} \right\rfloor = \left\lfloor \frac{d_G(v)}{k} \right\rfloor.
   \end{equation}
   Since the maximum degree of $G$ is at most $2k^2-1 \equiv k-1 \pmod k$, the maximum degree of $\overline{G}$ is also less than $2k^2$, hence $\overline{G}\in \mathcal{G}_k$. By our assumption, there is a $\frac{1}{k}$-majority $(k+1)$-edge-colouring $c$ of $\overline{G}$. 
   Since $G$ is a subgraph of $\overline{G}$, by~\eqref{dHvdGvFloorsTheSame}, the 
   colouring $c$ restricted to the edges of $G$
   yields a $\frac{1}{k}$-majority $(k+1)$-edge-colouring of $G$.
		\end{proof}
	\end{observation}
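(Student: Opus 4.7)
The plan is to reduce any graph $G$ with $\delta(G) \geq k^2$ to a graph $\overline{G} \in \mathcal{G}_k$ (i.e., one with every vertex degree in $S_k$) which contains $G$ as a spanning subgraph and satisfies $\lfloor d_{\overline{G}}(v)/k \rfloor = \lfloor d_G(v)/k \rfloor$ for every $v \in V(G)$. Once such a reduction is in place, restricting the assumed $\frac{1}{k}$-majority $(k+1)$-edge-colouring of $\overline{G}$ to the edges of $G$ will immediately yield the desired colouring of $G$.

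By Observation~\ref{limit_deg_1} I may first assume $\Delta(G) < 2k^2$, so every vertex has degree in the interval $[k^2, 2k^2)$. To coerce the degrees into the residue class $k-1 \pmod k$, I would iteratively apply the following \emph{doubling-with-matching} operation: take two disjoint copies of the current graph and, for each vertex whose degree is not yet in $S_k$, add an edge between it and its counterpart in the other copy. This increases the degree of each ``deficient'' vertex by exactly one, while leaving $G$ as a spanning subgraph of the enlarged graph. After at most $k-1$ iterations every vertex's degree is congruent to $k-1$ modulo $k$, producing the target graph $\overline{G} \in \mathcal{G}_k$.

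Two facts must then be verified. First, $\Delta(\overline{G}) < 2k^2$: the key arithmetic observation is that $2k^2 - 1 \equiv k-1 \pmod k$, so $2k^2 - 1$ itself lies in $S_k$, and the lifting process never has to push any degree past $2k^2 - 1$. Second, $\lfloor d_{\overline{G}}(v)/k \rfloor = \lfloor d_G(v)/k \rfloor$ for every $v \in V(G)$: this holds because each degree is raised by at most $k-1$, so $d_{\overline{G}}(v)$ and $d_G(v)$ fall in the same length-$k$ block $[mk, mk+k-1]$ and thus share the same floor.

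This is essentially a reduction argument, so there is no deep obstacle; the one subtlety worth flagging is the arithmetic coincidence that the endpoint $2k^2 - 1$ of the allowed degree interval happens to lie in $S_k$, which is precisely what guarantees the doubling-with-matching procedure terminates safely below the $2k^2$ threshold (and hence within the regime where the hypothesis on $\mathcal{G}_k$ is applicable).
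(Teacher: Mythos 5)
Your proposal is correct and follows essentially the same route as the paper: reduce to $\Delta < 2k^2$ via Observation~\ref{limit_deg_1}, then repeatedly double the graph and join deficient vertices to their copies until all degrees lie in $S_k$, using $2k^2-1\equiv k-1\pmod k$ to stay below $2k^2$ and the fact that degrees rise by at most $k-1$ to preserve $\left\lfloor d(v)/k\right\rfloor$. No gaps.
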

	
	Observations~\ref{limit_deg_1} and~\ref{limit_deg_2} allow us to narrow down the set of graphs we need to consider in order to prove 
 Conjecture~\ref{EdgeMajconj}. 
 To start with, we exemplify their usefulness by reproving Theorem~\ref{majority23-res2}, whose proof provided in~\cite{majority23} is rather lengthy. Tools and observations introduced above yield a short and straightforward argument.

		\begin{proof}[Proof of Theorem~\ref{majority23-res2}]
			Let $G$ be an arbitrary graph with minimum degree $\delta\geq 4$. By Observation~\ref{limit_deg_2},
we can assume that the degrees of all vertices of $G$ are in the set $S_2 = \{5,7\}$. Let $D_2$ be the set of vertices of degree $5$, and $D_3$ the set of vertices of degree $7$. Vertices in $D_2$ can have at most $2$ incident edges in the same colour, and vertices of $D_3$ can have at most $3$ such edges. We shall construct a majority $3$-edge-colouring of $G$ in two steps. First, use Lemma~\ref{mod_alon} with a weight function assigning $1/3$ to every edge of $G$ to colour some edges of $G$ with one of the three colours (similarly as in the proof of Theorem \ref{thm1-GenMajEdg}). Vertices in $D_2$ have $1$ or $2$ edges coloured, and vertices in $D_3$ -- $2$ or $3$. Let $H$ be the graph induced by uncoloured edges of $G$. Vertices in $D_2$ have degrees $3$ or $4$ in $H$, and vertices in $D_3$ have degrees $4$ or $5$. Finally, use Observation~\ref{euler} to colour the edges of $H$ with the remaining two colours. Note that every component of $H$ either has vertices of odd degree or all of its vertices have degree $4$ and thus such component has an even number of edges. 
			Hence, by Observation~\ref{euler}, at most $\left\lceil \frac{d_H(u)}{2} \right\rceil$ of the edges incident with any given vertex $u$ shall get the same colour, which satisfies the majority rule for the graph $G$. 
		\end{proof}

	\begin{theorem} \label{small3}
		Every graph with minimum degree $\delta\geq 9$ has a $\frac{1}{3}$-majority 4-edge-colouring.
		\begin{proof}
			Let $G$ be a graph with minimum degree $\delta\geq 9$. By Observation~\ref{limit_deg_2}, we can assume that the degrees of all vertices of $G$ are in the set $S_3 = \{11,14,17\}$. Similarly as before, let $D_3$ be the set of vertices of degree $11$ (which can have at most $3$ incident edges with the same colour), $D_4$ be the set of vertices of degree $14$ (allowing $4$ incident monochromatic edges), and $D_5$ -- vertices of degree $17$ (allowing $5$ incident edges with the same colour). Let $G'$ be a graph constructed from the graph $G$ by removing all components which have all vertices of degree $14$ and an odd number of edges. Hence, each component of $G'$ has an even number of edges or contains vertices of odd degree. Colour the edges of $G'$ using Observation~\ref{euler} with colours blue and red. The number of incident edges with the same colour 
   shall equal $5$ or $6$ for vertices in $D_3$, $7$ for vertices in $D_4$, and $8$ or $9$ for vertices in $D_5$.
			
			We shall show that 
   in fact we can choose such a $2$-edge-colouring of $G'$ complying with Observation~\ref{euler} that neither of the two colours induces a component  
   with an odd number of edges and all vertices of degree $6$. Assume this is not the case and consider a $2$-edge-colouring of $G'$ consistent with Observation~\ref{euler} with the least number of such bad components. Without loss of generality we can assume that there exists such a bad component, say $H$ in the graph induced by the blue edges. Notice that $H$ is in particular Eulerian, and thus it is $2$-edge-connected. Clearly, all the vertices in $H$ are in the set $D_3$ and have exactly 5 red incident edges (in $G'$). Let $v$ be an arbitrary vertex of $H$, and let $u_1, u_2$ be any two distinct neighbours of $v$ in $H$. Consider a component in the subgraph of $G'$ induced by the red edges such that $v$ is in this component, denote it $H'$. If $u_1$ is not in the same (red) component as $v$, then we can recolour the edge $u_1v$ with red colour. In such a case, $H$ shall no longer have exclusively vertices of degree $6$, and no new $6$-regular monochromatic component shall be created, since at least one other than $u_1$ vertex in $H'$ needs to have odd degree. 
			We proceed similarly if $u_1$ is in the same red component as $v$, but $u_2$ is not. If both $u_1$ and $u_2$ are in the same red component as $v$, then we can recolour the edge $u_1v$ to red colour. Then, neither $H$ nor $H'$ shall be a $6$-regular components. Hence, in each case, the number of monochromatic components with an odd number of edges and all vertices of degree $6$ can be decreased, which is in contradiction with the assumption that our colouring had the least possible number of bad components.
			
			As a result, both in the graph induced by the red edges and in the graph induced by the blue edges each component has an even number of edges or contains vertices of odd degree or contains a vertex of degree $8$. Hence, we can again use Observation~\ref{euler} (separately for graphs induced by both of the colours), choosing a vertex of degree $8$ as the bad vertex if necessary. The $4$-edge-colouring obtained this way satisfies the $\frac{1}{3}$-majority rule for the graph $G'$.
			
			Finally, consider components of $G$ with all vertices of degree $14$ and an odd number of edges. Using Observation~\ref{euler} we obtain a $2$-edge-colouring of such components with colours red and blue, such that in the subgraph generated by the edges of one of the colours all vertices shall have degree $7$, except one vertex of degree $6$ or $8$. In either case, there shall be a vertex of odd degree in each of the obtained monochromatic components, hence using again Observation~\ref{euler} (and merging the result with the colouring of $G'$) yields a $\frac{1}{3}$-majority $4$-edge-colouring of $G$.
		\end{proof}
	\end{theorem}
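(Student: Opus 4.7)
My plan is to combine Observation~\ref{limit_deg_2} with two consecutive applications of Observation~\ref{euler}. By Observation~\ref{limit_deg_2} it suffices to handle graphs $G$ whose vertex degrees all lie in $S_3=\{11,14,17\}$; write $D_3, D_4, D_5$ for the sets of vertices of degrees $11, 14, 17$, so that the $\frac{1}{3}$-majority bound at such a vertex is $3,4,5$ respectively. Since $\lceil d/4\rceil=3,4,5$ for $d=11,14,17$, the natural approach is to halve and then halve again: first 2-edge-colour $G$ with Blue/Red via Observation~\ref{euler}, then 2-edge-colour each monochromatic subgraph with two further colours.

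I would first set aside the components of $G$ in which every vertex lies in $D_4$ and the number of edges is odd; call the remaining subgraph $G'$. In every component of $G'$ either a vertex of odd degree exists or the number of edges is even, so Observation~\ref{euler}$(1^\circ)$ produces a 2-colouring of $G'$ with no bad vertex. The resulting monochromatic degrees are in $\{5,6\}$ on $D_3$, exactly $7$ on $D_4$, and in $\{8,9\}$ on $D_5$. The obstacle for the second split is that a bad vertex there would add $1$ to the final monochromatic count, ruining the bound $3$ at a $D_3$-vertex of monochromatic degree $6$. A bad vertex is only needed in a monochromatic component all of whose vertices have even monochromatic degree; since $D_4$-vertices have odd monochromatic degree $7$, such components avoid $D_4$ altogether. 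If a $D_5$-vertex lies in such a component I can choose it as the bad vertex, which then receives at most $5$ edges in the second split, matching the $D_5$-bound. The genuine obstruction is a \emph{bad component}: a monochromatic component that is $6$-regular, contained entirely in $D_3$, and has an odd number of edges.

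The main obstacle is to show that the initial Blue/Red colouring of $G'$ can be chosen so that no bad component appears in either colour. I would take, among valid 2-colourings of $G'$, one minimising the total number of bad components, and assume for contradiction that, say, a blue bad component $H$ exists. Since $H$ is Eulerian it is $2$-edge-connected, so any $v\in V(H)$ has at least two distinct blue neighbours $u_1,u_2\in H$, while its red incident edges number exactly $5$. Letting $H'$ be the red component of $v$, I split into cases according to whether $u_1$ and $u_2$ belong to $H'$ and recolour one of the edges $u_1v,u_2v$ from blue to red; in each case this destroys $H$ without creating a new bad component of the opposite colour, contradicting minimality. Verifying in every case that the new red component containing $v$ is not itself bad, by tracking how parities and degrees change after the swap, is the delicate step of the argument.

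With $G'$ coloured in four colours, it remains to process the set-aside components, each consisting entirely of $D_4$-vertices with an odd number of edges. Applying Observation~\ref{euler}$(2^\circ)$ to such a component yields a 2-colouring with a unique bad vertex of monochromatic degrees $8$ and $6$ in the two colours, and all other vertices of monochromatic degree $7$. Since every resulting monochromatic component contains a vertex of odd degree, the second Euler split requires no bad vertex; every $D_4$-vertex then ends with at most $\lceil 7/2\rceil=4$ edges of each final colour, matching the $D_4$-bound. Merging the colourings of $G'$ and the set-aside components yields the desired $\frac{1}{3}$-majority $4$-edge-colouring of $G$.
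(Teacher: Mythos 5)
Your proposal follows the paper's proof essentially step for step: the same reduction via Observation~\ref{limit_deg_2} to degrees in $\{11,14,17\}$, the same setting aside of odd-size components with all vertices of degree $14$, the same two consecutive applications of Observation~\ref{euler} with a degree-$8$ vertex chosen as the bad vertex in the second split when needed, and the same minimality-plus-recolouring argument to destroy $6$-regular odd-size monochromatic components. The one step you flag as delicate is handled in the paper exactly as you outline, by observing that after the swap the relevant red component retains a vertex of odd degree (or, when both $u_1$ and $u_2$ lie in $H'$, that neither $H$ nor $H'$ remains $6$-regular).
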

	
	\begin{theorem}
		Every graph with minimum degree $\delta\geq 16$ has a $\frac{1}{4}$-majority $5$-edge-colouring.
		\begin{proof}
			Let $G$ be a graph with minimum degree $\delta\geq 16$. By Observation~\ref{limit_deg_2}, we can assume that the degrees of all vertices of $G$ are in the set $S_4 = \{19,23,27,31\}$. Let $D_4$ be the set of vertices of degree $19$ (which can have at most $4$ incident edges with the same colour), $D_5$ be the set of vertices of degree $23$ (allowing $5$ incident monochromatic edges), $D_6$ be the set of vertices of degree $27$ (allowing $6$ incident edges with the same colour), and $D_7$ -- vertices of degree $31$ (allowing $7$ incident monochromatic edges). First, use Lemma~\ref{mod_alon} with a weight function assigning $1/5$ to all edges of $G$ to colour some edges of $G$ with colour 1. As a result, every vertex $v$ of $G$ has either $\lfloor \frac{d(v)}{5} \rfloor$ or $\lceil \frac{d(v)}{5} \rceil$ incident edges coloured $1$. As none of the vertices has degree divisible by $5$, these two values are distinct, and by Lemma~\ref{mod_alon} (ii), every edge $uv$ with exactly $\lfloor \frac{d(u)}{5} \rfloor$ edges incident with $u$ coloured $1$ and exactly $\lfloor \frac{d(v)}{5} \rfloor$ edges incident with $v$ coloured $1$ must be coloured $1$ as well.

			Let $H$ be the subgraph of $G$ induced by the uncoloured edges. Vertices in $D_4$ have degrees in $\{15,16\}$ in $H$, vertices in $D_5$ -- degrees in $\{18,19\}$, vertices in $D_6$ -- degrees in $\{21,22\}$, and vertices in $D_7$ -- degrees in $\{24,25\}$. Using Observation~\ref{euler} divide the graph $H$ into two subgraphs $H_1$ (coloured blue) and $H_2$ (coloured red), choosing vertices of degree $18$ or $22$ as the bad vertices, if necessary. In the components of the graphs $H_1$ and $H_2$ we have the following situation: vertices in $D_4$ have degrees in $\{7,8\}$, vertices in $D_5$ have degrees in $\{9,10\}$ (and possibly a single vertex has degree $8$), vertices in $D_6$ have degrees in $\{10,11\}$ (and possibly a single vertex has degree $12$), and vertices in $D_7$ have degrees in $\{12,13\}$. 
			Notice that if a vertex in $D_6$ has degree $12$, then in the graph $H$, it had to be in a component with no vertex of odd degree, hence in the graphs $H_1$ and $H_2$, this vertex cannot be in the same component as any vertex of degree $10$.
			
			We shall show that we can recolour the graph $H$ (retaining conditions mentioned above) in such a way that neither $H_1$ nor $H_2$ contains a component with an odd number of edges whose every vertex either has degree $10$ and belongs to $D_5$ or has degree $8$ and belongs to $D_4$.
   Assume this is not possible and consider a colouring with the least number of such components. Let $\overline{H}$ be one of these components. Since the number of edges of $\overline{H}$ is odd, at least one of the vertices in $\overline{H}$ must have degree $10$. Let $v$ be a vertex of degree $10$ in $\overline{H}$ and let $u_1$ and $u_2$ be two distinct neighbours of $v$ in $\overline{H}$. 
   If $v$ is the only vertex of degree $10$ in $\overline{H}$ and $d_H(v)=18$, then $v$ is a bad vertex and for all the remaining vertices $u$ of $\overline{H}$ we must have $d_H(u)=16=\lceil \frac{d(u)}{5} \rceil$, and two such vertices must be adjacent in 
   $\overline{H}$ (hence also in $H$), which is impossible by the last remark of the first paragraph of the proof.
   We may thus assume that 
   $d_H(v)=19=\lceil \frac{d(v)}{5} \rceil$, and hence, again by the last remark of the first paragraph of the proof,  
   $u_1$ and $u_2$ must be vertices of degree $8$ in $\overline{H}$ and $15$ in $H$. 
   Thus, proceeding analogously as in the proof of Theorem~\ref{small3} we can obtain a colouring of $H$ with a smaller number of bad components, a contradiction. 
			
			Hence, each component of $H_1$ and $H_2$ contains vertices of odd degree or has an even number of edges or contains a vertex of degree $10$ belonging to  $D_6$ or a vertex of degree $12$ belonging to $D_7$. Thus, using Observation~\ref{euler} (with one of the mentioned vertices being chosen as the bad vertex, if necessary) we can obtain a $4$-edge-colouring of $H$, which completes a $\frac{1}{4}$-majority $5$-edge-colouring of $G$. 
		\end{proof}
	\end{theorem}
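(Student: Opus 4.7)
The plan is to follow the template set by Theorem~\ref{small3} for $k=3$, with one extra layer. By Observation~\ref{limit_deg_2}, I may assume every vertex of $G$ has degree in $S_4 = \{19, 23, 27, 31\}$, and I partition $V(G)$ into $D_4, D_5, D_6, D_7$ whose monochromatic caps per colour are $4, 5, 6, 7$ respectively. Since I have five colours to distribute but only two nested applications of Observation~\ref{euler} yield four classes, I will first peel off colour $1$ using Lemma~\ref{mod_alon} with the constant weight $z\equiv 1/5$, then split the residual subgraph $H$ into a blue part $H_1$ and a red part $H_2$ via Observation~\ref{euler}, and finally split each of $H_1,H_2$ into two further colours with Observation~\ref{euler} again.

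Because no degree in $S_4$ is divisible by $5$, after the first step every vertex has either $\lfloor d(v)/5\rfloor$ or $\lceil d(v)/5\rceil$ edges coloured $1$, leaving residual $H$-degrees of $15$ or $16$ on $D_4$, $18$ or $19$ on $D_5$, $21$ or $22$ on $D_6$, and $24$ or $25$ on $D_7$. I will also record the side-condition from Lemma~\ref{mod_alon}(ii): every edge of $H$ has at least one endpoint at the \emph{lower} of its two possible residual $H$-degrees, because in $G$ no edge can join two vertices both at their lower count of colour-$1$ edges. When then applying Observation~\ref{euler} to $H$, I must avoid picking a bad vertex from $D_4$ at $H$-degree $16$, since the subsequent two-colour split of its half would otherwise be forced to place $5$ monochromatic edges at a vertex capped at $4$; fortunately, any unavoidable bad vertex can be relocated to $D_5$ at degree $18$ or $D_6$ at degree $22$, both of which have slack.

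The main obstacle — just as in Theorem~\ref{small3} — will be ruling out Eulerian components of $H_1$ or $H_2$ with an odd number of edges in which every vertex is at a degree that forbids it from serving as a bad vertex in the final split. Here the forbidden roles are $D_4$-vertices of degree $8$ and $D_5$-vertices of degree $10$, at both of which a $+1$ from a bad-vertex rounding would exceed the monochromatic cap. I plan to consider a $2$-edge-colouring $(H_1,H_2)$ of $H$ minimising the number of such ``bad'' components, and then, picking a degree-$10$ vertex $v$ in one of them together with two distinct neighbours $u_1,u_2$, to argue an edge-swap that strictly decreases the count of bad components — mirroring the recolouring argument in the proof of Theorem~\ref{small3}. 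The side-condition recorded above is precisely what closes the tightest subcase: it forbids an edge of $H$ between two vertices both at their \emph{higher} residual $H$-degree, eliminating the otherwise awkward configuration in which every neighbour of $v$ lies inside the same problematic red or blue component. Once all such bad components have been eliminated, a final application of Observation~\ref{euler} to each of $H_1$ and $H_2$ — with bad vertices, if needed, placed at $D_6$ of degree $10$ or $D_7$ of degree $12$ where the cap has room — delivers colours $2,3$ and $4,5$ and completes the $\frac{1}{4}$-majority $5$-edge-colouring.
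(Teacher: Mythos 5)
Your plan coincides in structure with the paper's proof: restrict the degrees to $S_4$ via Observation~\ref{limit_deg_2}, peel off colour $1$ with Lemma~\ref{mod_alon} at constant weight $1/5$ while recording the consequence of part (ii), split the remainder twice with Observation~\ref{euler}, and remove the obstructing Eulerian components by a minimal-counterexample edge swap. Two points in your sketch are, however, not yet sound. First, your list of ``forbidden roles'' for a bad vertex in the final split is incomplete: besides degree-$8$ vertices of $D_4$ and degree-$10$ vertices of $D_5$, a $D_6$-vertex of degree $12$ in $H_1$ or $H_2$ (which arises exactly when a vertex of $H$-degree $22$ was chosen as the bad vertex of $H$) also cannot absorb the extra edge, since $12/2+1=7>6$. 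Your recolouring only attacks components all of whose vertices are of the first two types, so a component with an odd number of edges, all degrees even, containing such a degree-$12$ $D_6$-vertex alongside degree-$8$-$D_4$ and degree-$10$-$D_5$ vertices and no admissible bad-vertex candidate would slip through both your elimination step and your final application of Observation~\ref{euler}. The paper closes this with a dedicated remark: a degree-$12$ $D_6$-vertex comes from an $H$-component in which every degree is even, where all non-bad vertices split exactly in half, so its component in $H_1$ or $H_2$ contains no vertex of degree $10$ at all; and a component whose vertices all have degree $8$ (in $D_4$) or $12$ has an even number of edges. Some version of this observation is needed.

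Second, the place where the Lemma~\ref{mod_alon}(ii) side-condition is actually used is not the one you name. The configuration in which the neighbours $u_1,u_2$ of $v$ lie in the same opposite-colour component is handled, exactly as in Theorem~\ref{small3}, by a direct swap that leaves an odd-degree vertex in that component; what (ii) is really needed for is (a) killing the subcase where the degree-$10$ vertex $v$ of the bad component has $d_H(v)=18$, i.e.\ $v$ was itself the bad vertex of $H$ --- then all remaining vertices of the component have $H$-degree $16$, two of them are adjacent in $H$, and both sit at the lower count of colour-$1$ edges, contradicting (ii) --- and (b) forcing, in the remaining subcase $d_H(v)=19$, that $u_1$ and $u_2$ have $H$-degree $15$ and degree $8$ in the bad component, which is what makes the subsequent swap analysis go through without creating a new bad component. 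As a plan your proposal is on the right track and uses the same tools in the same order as the paper, but these two case analyses are the actual content of the argument and are missing from it.
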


\section{Concluding remarks}

Theorem~\ref{thm1-GenMajEdg} and the construction in Observation~\ref{ExampleGeneral} imply we managed to settle the order of magnitude of our main objective: $\delta_k^{\rm opt}$ and approximate it  
within a multiplicative factor of $2$.
Our Conjecture~\ref{EdgeMajconj} clearly conveys  
we expect  
that $2$ is a redundant factor in our $2k^2$ upper bound for $\delta_k^{\rm opt}$. In fact, Corollary~\ref{7_4_Corollary} shows that the leading factor in this bound should not be larger than $\frac{7}{4}k^2$. 
Moreover, Theorem~\ref{thm2-GenMajEdg} also implies that there is e.g. an infinite sequence of values of $k$ for which $\delta_k^{\rm opt}\leq (\frac{3}{2}+o(1))k^2$.

On the other hand, even though we were able to confirm the conjecture for several initial values of $k$ in Section~\ref{SectionSmallCases}, 
we are not entirely 
convinced that the postulated quantity of $\delta_k^{\rm opt}$ has to be 
precisely correct for all $k$. 
One may possibly come up 
with some more sophisticated construction than the one in Observation~\ref{ExampleGeneral}, and this seems an interesting 
direction to be more thoroughly investigated.
However, we would not expect the lower bound stemming from such a potential construction to exceed $k^2$ by far. 
In any case we strongly expect an upper bound of the form $(1+o(1))k^2$ to be valid for $\delta_k^{\rm opt}$.

Recall that Observation~\ref{ExampleGeneral} implies we cannot directly extend to all graphs our Theorem~\ref{bipartiteTheorem},  yielding an optimal solution for the family of bipartite graphs.
However, as mentioned, the main obstacle on the way towards obtaining some form of such an extension was 
lack of a
valid in the general setting
correspondent of Claim~\ref{Claim1-bipartite} from the proof of Theorem~\ref{bipartiteTheorem}, where 
in some sense it allowed us to control and `capture' degrees of consecutively constructed subgraphs of a given bipartite graph 
within a reasonably narrow interval.
In fact, in pursuit of such a correspondent
we came up with our refinement of the lemma of Alon and Wei~\cite{alon}, that is Lemma~\ref{mod_alon}.
Even though some aspects of this slight improvement were useful and crucial in the case of bipartite graphs, we did not use 
it in full measure, while at the same time it was not strong enough to provide a result we expect in a general case.
Nevertheless, we decided to include in our paper this
slightly excessive form of Lemma~\ref{mod_alon},
as a suggestion for possible further development of this tool, which might hopefully lead to solving Conjecture~\ref{EdgeMajconj}, or at least help closing 
the current gap.

Let us also
mention we believe that even solving our Problem~\ref{GeneralizedMajorityProblem} for the first open case of $k=5$ (and maybe some consecutive initial ones) 
seems interesting by itself,
and may furthermore  
shed light on a possible approach to attack Conjecture~\ref{EdgeMajconj} in its entirety.

Finally, let us remark why we believe the probabilistic approach seems difficult to be (directly) utilised while trying to prove Conjecture~\ref{EdgeMajconj}. It stems from the fact that if a graph has a vertex $v$ with degree (close to) $k^2$, then while colouring its edges randomly with $k+1$ colours we expect every colour to appear roughly $d(v)/(k+1)>k-1$ times (and in fact some colours must appear at least this many times) around $v$, while we admit at most $\lfloor d(v)/k\rfloor \leq k$ appearances of each colour. Thus in a way we admit an error of at most $1$ in frequency of appearing of each colour, which does not seem achievable via probabilistic approach, as e.g. typical concentration tools require admitting an error ``slightly'' larger than $\sqrt{(d(v)/k)}$ (which is enough as long as $d(v)$ is of magnitude roughly $k^3\log k$). This is also why we reckon that our, rather naive in nature, approach is surprisingly efficient.

\end{document}